\newtheorem{theorem}{Theorem}[section]
\newtheorem*{factortheorem*}{Theorem~\ref{factor}}
\newtheorem{proposition}[theorem]{Proposition}
\newtheorem{corollary}[theorem]{Corollary}
\theoremstyle{definition}
\newtheorem{definition}[theorem]{Definition}
\newtheorem{example}[theorem]{Example}
\newcommand{\colonequal}{\mathrel{\mathop:}=}
\newcommand{\xch}[2]{#1}
\newcommand{\reftext}[1]{#1}
\newcommand{\Left}[1]{\left}
\newcommand{\Right}[1]{\right}
\newcommand{\eqncr}{\\}
\newcommand{\noxml}[1]{}
\newcommand{\xmlstring}[1]{}
\begin{document}

\title[A characterization of $p$-automatic sequences]{A characterization of $p$-automatic sequences as columns of linear cellular automata}

\author{Eric Rowland}
\address{
	LaCIM \\
	Universit\'e du Qu\'ebec \`a Montr\'eal \\
	Montr\'eal, QC H2X 3Y7, Canada
}
\curraddr{
	D\'epartement de Math\'ematiques \\
	University of Li\`ege \\
	4000 Li\`ege, Belgium
}

\author{Reem Yassawi}
\thanks{The second author was partially supported by an NSERC grant.}
\address{Trent University, Peterborough, Canada}


\subjclass[2010]{37B10, 37B15, 68Q80}

\begin{abstract} 
We show that a sequence over a finite field $\mathbb F_q$ of
characteristic $p$ is $p$-automatic if and only if it occurs as a
column of the spacetime diagram, with eventually periodic initial
conditions, of a linear cellular automaton with memory over $\mathbb
F_q$. As a consequence, the subshift generated by a \xch{length-$p$
substitution}{length $p$-substitution} can be realized as a topological
factor of a linear cellular automaton.
\end{abstract}

\maketitle

\section{Introduction}\label{Introduction}

In a cellular automaton, each cell has a value at each time step, so it
is natural to consider the sequence of values taken by a given cell at
times $0$, $1$, $2$, etc. For a one-dimensional cellular automaton,
such a sequence of states is a column sequence in a two-dimensional
spacetime diagram of the cellular automaton. Some column sequences are
quite simple, such as the characteristic sequence of powers of~$2$,
which occurs as a column in the \xch{spacetime}{space-time} diagram of
\xch{rule}{Rule}~$90$ begun from a single 1 cell on background of $0$s;
this rule adds its two \xch{neighbors}{neighbours} modulo $2$. On the
other hand, some sequences are statistically random, such as the center
column of \xch{rule}{Rule}~$30$~\cite[\xch{p.}{page}~28]{NKS}.

A relatively unexplored question is the following. Given a sequence on
a finite alphabet, does this sequence occur as a column of a cellular
automaton spacetime diagram? Without additional restrictions, it is
possible to obtain any sequence (for example, the base-$10$ digits of
$\pi$) as a column: simply place the sequence in the initial condition,
and let the cellular automaton be the shift map $\sigma$. To avoid this
trivial case, we impose the restriction that initial conditions be
eventually periodic in both directions.

Wolfram found, by brute-force search, spacetime diagrams containing the
characteristic sequence of squares and the Thue--Morse
sequence~\cite[\xch{p.}{page}~1186]{NKS}. It is also possible to
construct spacetime diagrams with more exotic column sequences, such as
the characteristic sequence of primes~\cite[\xch{p.}{page}~640]{NKS}.

In this paper we study $p$-automatic sequences occurring as columns of
cellular automaton spacetime diagrams. We assume that $p$ is prime and
$\mathbb F_q$ is a finite field of characteristic $p$ throughout. Litow
and Dumas~\cite{ld} gave several examples of cellular automata
containing well-known $p$-automatic sequences as columns. They also
proved that each column of a linear cellular automaton over $\mathbb
F_q$, begun from an initial condition with finitely many nonzero
entries, is necessarily $p$-automatic. We use Litow and Dumas's
approach to establish the following characterization of $p$-automatic
sequences. (All relevant definitions are in Section~\ref{definitions}.)

\begin{theorem}\label{characterization}
A sequence of elements in $\mathbb F_q$ is $p$-automatic if and only if
it is a column of a spacetime diagram of a linear cellular automaton
with memory over $\mathbb F_{q}$ whose initial conditions are
eventually periodic in both directions.
\end{theorem}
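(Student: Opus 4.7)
Both directions are mediated by the formal-power-series interpretation of a CA row: I encode the configuration at time $t$ as a two-sided formal sum $R_t(x)\in\F_q[[x,x^{-1}]]$, so that ``eventually periodic in both directions'' becomes exactly the rationality of $R_t$ as a function of $x$. A linear CA with memory depth $m$ acts by a shift-invariant linear recurrence
\[
R_{t+1}(x)=L_0(x)R_t(x)+\cdots+L_{m-1}(x)R_{t-m+1}(x),\qquad L_i\in\F_q[x,x^{-1}],
\]
and column zero at time $t$ is $r_{t,0}=[x^0]R_t(x)$. For the $\Leftarrow$ direction, the bivariate generating series $\Phi(x,y)=\sum_{t\ge 0}R_t(x)\,y^t$ satisfies $\Phi\cdot(1-\sum_i L_i(x)y^{i+1})=N(x,y)$ with $N\in\F_q(x)[y]$ determined by the rational initial rows, so $\Phi\in\F_q(x,y)$. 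The column-zero generating series $\sum_t r_{t,0}\,y^t=[x^0]\Phi(x,y)$ is then the constant term in $x$ of a bivariate rational; by standard results on rational/algebraic series this is algebraic over $\F_q(y)$, and Christol's theorem gives that column zero is $p$-automatic. This upgrades the Litow--Dumas argument from ``polynomial $R_0$'' to ``rational $R_0$.''

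For the $\Rightarrow$ direction I reverse the reasoning. Given $p$-automatic $a$, Christol's theorem makes $A(y)=\sum_n a(n)y^n$ algebraic over $\F_q(y)$. By the characteristic-$p$ Furstenberg-type representation of algebraic series, I produce $F,Q\in\F_q[x,x^{-1},y]$ with $Q(x,0)=1$ and $A(y)=[x^0]F(x,y)/Q(x,y)$. Writing $Q(x,y)=1-\sum_{i=0}^{m-1}L_i(x)y^{i+1}$ reads off Laurent-polynomial CA weights, and expanding $F/Q=\sum_t R_t(x)y^t$ extracts initial rows $R_0,\ldots,R_{m-1}$ that are rational in $x$ and hence eventually periodic in both directions; by cross-multiplication the $R_t$ satisfy the CA recurrence with column zero equal to $A$. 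An alternative route closer in spirit to Litow--Dumas goes through the finite $p$-kernel of $a$: picking a basis $a_1=a,a_2,\ldots,a_s$ with linear relations $a_i(pn+j)=\sum_k c_{i,j,k}a_k(n)$ allows the matrices $(c_{i,j,k})_j$ to be packaged as Laurent-polynomial update weights acting on a row built from $s$ interleaved eventually periodic tracks.

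The main obstacle lies in the $\Rightarrow$ direction: forcing the representation of $A(y)$ into the precise shape $[x^0]F/Q$ with denominator $1-\sum L_i(x)y^{i+1}$ --- equivalently, with $Q(x,0)=1$ and every monomial of $Q-1$ divisible by $y$ --- is not automatic from a generic constant-term or diagonal representation. Achieving this ``recurrence-friendly'' normalization is likely to require the characteristic-$p$ Frobenius identity $f(x)^p=\phi(f)(x^p)$ to absorb constant-in-$y$ obstructions; once in place, the eventual periodicity on both sides of $R_0,\ldots,R_{m-1}$ is automatic from their rationality in $\F_q(x)$.
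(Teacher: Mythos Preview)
Your plan is correct and follows essentially the same route as the paper: both directions pass through the bivariate rational series $\sum_{n\ge 0} R_n(x)\,t^n$, Furstenberg's diagonal theorem, and Christol's theorem, and the obstacle you single out in the $\Rightarrow$ direction---normalizing the denominator so that its $t^0$-coefficient is a monomial in $x$---is exactly what the paper isolates as its key technical step (Proposition~\ref{technical_1}). The paper resolves it by starting from an Ore-shaped relation $\sum_i A_i(t)x^{p^i}+B(t)=0$ with $A_0(0)\neq 0$ (guaranteed by Furstenberg's Proposition~\ref{prop_1}), substituting $x\mapsto u_{r-1}+u_r t + t\,x$, and dividing out a power of $t$; as a bonus this yields initial rows that are Laurent polynomials (finitely supported), not merely rational.
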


Furthermore, the proof in each direction is constructive.
In particular, there is an algorithm to compute the local cellular
automaton rule, given a finite automaton for the $p$-automatic sequence.

In \cite{ahpps}, the authors study the automaticity of the
two-dimensional sequence of entries in the spacetime diagram generated
by a linear cellular automaton over the integers modulo~$m$, with
eventually constant initial conditions. Let such a cellular automaton
be generated by the polynomial $C(x) \in (\mathbb Z/(m\mathbb Z))[x]$.
They show that the two-dimensional spacetime diagram is $p$-automatic
if and only if the set of prime divisors of $m$ such that the $C(x)
\bmod p$ is not a monomial is either $\{p\}$ or the empty set. It
follows that a linear cellular automaton over $\mathbb F_p$ generates a
$p$-automatic spacetime diagram. Moreover, columns of this spacetime
diagram, being one-dimensional slices of a two-dimensional
$p$-automatic sequence, are also $p$-automatic.

As a consequence of \reftext{Theorem~\ref{characterization}} we are able to prove
the following.

\begin{theorem}\label{factor}
If $\mathbf{u}= (u_n)_{n \geq0}$ is $p$-automatic, then for some $q$
and $d\geq1$, there exists a linear cellular automaton
$\varPhi:({\mathbb F_{q}^{d}})^{\mathbb Z}\rightarrow({\mathbb
F_{q}^{d}})^{\mathbb Z}$ and a subsystem $(Y, \varPhi)$ of
$(({\mathbb F_{q}^{d}})^{\mathbb Z},\varPhi)$ such that
$(X_\mathbf{u}, \sigma)$ is a topological factor of $(Y,\varPhi)$.
\end{theorem}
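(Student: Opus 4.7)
The plan is to extract $\Phi$ and $Y$ from the construction guaranteed by Theorem~\ref{characterization} and exhibit the factor map as the ``read off a column'' map. The key observation is that this natural projection automatically intertwines the CA dynamics with the shift, so the hard work is already done.

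First, apply Theorem~\ref{characterization} to $\bu$ to obtain a linear cellular automaton with memory over some $\F_q$, together with an initial condition eventually periodic in both directions, whose spacetime diagram has $\bu$ as a column. Encoding a memory of depth $d$ into the enlarged alphabet $\F_q^d$ turns this into a memoryless linear CA $\Phi \colon (\F_q^d)^{\Z} \to (\F_q^d)^{\Z}$ with an eventually periodic initial condition $x_0 \in (\F_q^d)^{\Z}$, and after translating we may assume
\[
u_n = \mathrm{pr}\bigl(\Phi^n(x_0)_0\bigr), \qquad n \geq 0,
\]
where $\mathrm{pr} \colon \F_q^d \to \F_q$ is the coordinate that recovers the present state from the memory encoding.

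Next, set $Y := \overline{\{\Phi^k x_0 : k \geq 0\}} \subseteq (\F_q^d)^{\Z}$. Since $\Phi$ is continuous, $Y$ is closed and $\Phi$-invariant, so $(Y, \Phi)$ is a subsystem. Define $\pi \colon Y \to \F_q^{\N}$ by $\pi(y)_n := \mathrm{pr}(\Phi^n(y)_0)$. Because $\Phi$ is a cellular automaton, the coordinate $\pi(y)_n$ depends on only finitely many entries of $y$, so $\pi$ is continuous; and the identity $\pi(\Phi y)_n = \mathrm{pr}(\Phi^{n+1}(y)_0) = \pi(y)_{n+1}$ gives $\pi \circ \Phi = \sigma \circ \pi$ directly, with $\pi(x_0) = \bu$.

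Finally, verify that $\pi(Y) = X_\bu$. The image $\pi(Y)$ is closed (continuous image of a compact set) and contains $\sigma^k \bu = \pi(\Phi^k x_0)$ for every $k \geq 0$, so $\pi(Y) \supseteq \overline{\{\sigma^k \bu\}} = X_\bu$; the reverse inclusion follows because $\pi$ sends $\{\Phi^k x_0\}$ into $\{\sigma^k \bu\}$ and both $\pi$ and closures are respected. There is little obstacle here: the entire substance of the theorem is carried by Theorem~\ref{characterization}, and the remaining task is only to recognize that the single-column projection is, tautologically, a factor map from the CA dynamics to the shift.
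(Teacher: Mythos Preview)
Your argument is correct and follows the same overall architecture as the paper: obtain a memoryless linear CA $\Phi$ on $(\F_q^{\,d})^{\Z}$ from Theorem~\ref{characterization} (the paper quotes Corollary~\ref{without memory} for exactly this step), take $Y$ to be the $\Phi$-orbit closure of the initial configuration, and project onto a single column to get the factor map.

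The one noteworthy difference is how the factor map is defined and shown to be continuous. The paper first defines $\Psi$ only on the orbit by $\Psi(\Phi^n(R_0)) \colonequal \sigma^n(\bv)$ and then spends most of the proof extending $\Psi$ to the closure $Y$ via a uniform-continuity argument, checking well-definedness, continuity, and surjectivity by hand. You instead write down the global formula $\pi(y)_n = \mathrm{pr}(\Phi^n(y)_0)$ on all of $Y$ at once; continuity and the intertwining relation are then immediate from the locality of $\Phi$, and $\pi(Y)=X_\bu$ follows from compactness. Your route is more direct and avoids the $\epsilon/3$ bookkeeping, at the cost of relying implicitly on the fact that ``read off the column through the origin'' is given by a concrete local formula rather than an abstractly defined orbit map. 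Both arguments produce the same factor map.
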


We remark that for each $n \geq1$ a sequence is $p^{n}$-automatic if
and only if it is $p$-automatic~\cite[Theorem~6.6.4]{ash}, so
\reftext{Theorems~\ref{characterization} and \ref{factor}} also apply to
$p^{n}$-automatic sequences. Moreover, by injecting the alphabet of a
general $p$-automatic sequence into some $\mathbb F_q$, we can find an
image of that sequence, under the injection, as a column of a spacetime
diagram.

The proof of \reftext{Theorem~\ref{characterization}} appears in
Section~\ref{columns} along with some corollaries. In
Section~\ref{examples} we discuss an algorithm that, given a finite
automaton for a sequence, generates the desired cellular automaton with
memory, and we compute several examples. In Section~\ref{factors} we
prove \reftext{Theorem~\ref{factor}} and give conditions that ensure that the
factor mapping of \reftext{Theorem~\ref{factor}} is an embedding.

\section{Definitions and notation}\label{definitions}

In this section we recall definitions of some terms that we use.
Let $\varSigma_{k}=\{0,1, \ldots, k-1\}$.
If $n= \sum_{i=0}^{l} a_{i}k^{i}$ is the standard base-$k$
representation of $n$ with $0\leq a_{i}\leq k-1$ and $a_{l}\neq0$, define
$(n)_{k}$ to be the word $a_{0} a_{1}\cdots a_{l}$.
We start with the cumbersome, but necessary, definition of a finite
automaton that generates an automatic sequence:

\begin{definition}\label{dfao}
A \emph{deterministic finite automaton with output} (DFAO) is a 6-tuple
$(\mathcal S, \varSigma_{k},\delta, s_{0}, \mathcal A, \omega)$, where
$\mathcal S$ is a finite set of ``states'', $s_{0}\in\mathcal S$ is the
\emph{initial state}, $\mathcal A$ is a finite alphabet,
$\omega:\mathcal S\rightarrow\mathcal A$ is the \emph{output function},
and $\delta:\mathcal S\times\varSigma_{k}\rightarrow \mathcal S$ is the
\emph{transition function}.
\end{definition}

In the symbolic dynamics literature, $\omega$ is also known as a
\emph{coding} or a \emph{letter-to-letter projection}. The function
$\delta$ extends in a natural way to the domain $\mathcal
S\times\varSigma_{k}^{*}$. Namely, define $\delta(s, a_0 a_1 \cdots a_l)
:=\delta(\delta(s, a_0), a_1 \cdots a_l)$ recursively. This
allows us to feed the automaton with base-$k$ representations of
natural numbers:

\begin{definition}
A sequence $(u_n)_{n \geq0}$ of elements in $\mathcal A$ is
\emph{$k$-automatic} if there is a DFAO $(\mathcal
S,\varSigma_{k},\delta, s_{0}, \mathcal A, \omega)$ such that $u_{n} =
\omega(\delta(s_{0},(n)_{k}))$ for all $n \geq0$.
\end{definition}

\begin{example}
The Thue--Morse sequence is the sequence $(u_n)_{n \geq0} = 0, 1, 1,
0, 1, 0, 0, 1, \dots$ where $u_n = 0$ if the number of occurrences of
$1$ in the binary representation of $n$ is even and $u_n = 1$ otherwise.
The Thue--Morse sequence is $2$-automatic, and it is generated by the
following automaton, where the two states are labeled with their images
under~$\omega$.
\begin{center}
	\includegraphics[scale=.7]{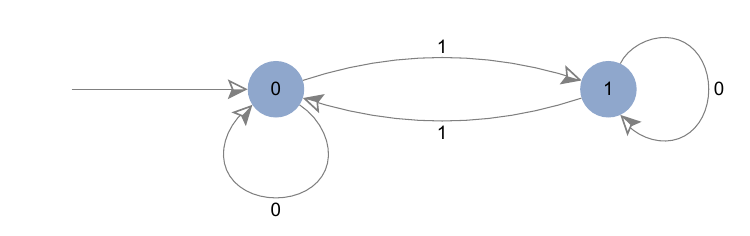}
\end{center}
\end{example}

Let ${\mathbb N} :=\{0,1, \ldots\}$. If ${\mathbb M} = {\mathbb Z}$ or
${\mathbb M} = {\mathbb N}$, then the space of all ${\mathbb
M}$-indexed sequences from ${\mathcal A}$ is written as ${\mathcal
A}^{\mathbb M}$, and an element in ${\mathcal A}^{\mathbb M}$, a
\emph{configuration}, is written $R=(R(m))_{m \in \mathbb M}$. (Our
configurations will be thought of as rows of a two-dimensional array.)
Let ${\mathcal A}$ be endowed with the discrete topology and ${\mathcal
A}^{\mathbb M}$ with the product topology; then ${\mathcal A}^{\mathbb
M}$ is a Cantor space, that is, a zero-dimensional perfect compact
metric space. If $b\in{\mathcal A}$ and $m \in\mathbb M$, the
clopen sets $\{R : R(m)=b\}$ generate a countable basis for the
topology on ${\mathcal A}^{\mathbb M}$. The (left) \emph{shift map}
$\sigma: {\mathcal A}^{\mathbb M} \rightarrow {\mathcal A}^{\mathbb M}$
is the map defined as $(\sigma(R))(m) =R(m+1)$. Given two configuration
spaces ${\mathcal A}^{\mathbb M}$ and ${\mathcal B}^{\mathbb M}$, we
shall use $\sigma$ to refer to the shift map on either of these spaces.
Recall that if $\varPhi:{\mathcal A}^{\mathbb M} \rightarrow{\mathcal
B}^{\mathbb M}$, we say that $\varPhi$ \emph{commutes with the shift}
if $\sigma\circ\varPhi= \varPhi\circ\sigma$.

\begin{definition}
A (one-dimensional) \emph{cellular automaton with memory $d$} is a
continuous, $\sigma$-commuting map $\varPhi: ({\mathcal
A^{d}})^{\mathbb Z}\rightarrow{\mathcal A}^{\mathbb Z}$.
\end{definition}

By memory here we mean ``time'' memory, and this will become clear
after \reftext{Definition~\ref{spacetime}}. To recover the classical
definition of a cellular automaton, we let $d=1$. In this case the
cellular automaton only needs to know the current configuration, and
nothing of the configuration's past. The Curtis--Hedlund--Lyndon
theorem \cite{h} states that $\varPhi$ is a cellular automaton if there
is some \emph{local rule} $\phi:\mathcal{A}^{l+r+1 }\rightarrow
\mathcal{A}$ for some $l\geq0$ (the \emph{left radius of $\phi$}) and
$r\geq0$ (the \emph{right radius of $\phi$)}, such that for all $R
\in{\mathcal A}^\mathbb{Z}$ and all $m \in\mathbb{Z}$,
%
\begin{eqnarray}
\label{ca_local} \Left{bigl}(\varPhi(R)\Right{bigr}) (m) = \phi\Left{bigl}(R(m-l),
R(m-l+1), \ldots, R(m+r)\Right{bigr}) .
\end{eqnarray}
Conversely, any local rule $\phi$ defines a cellular automaton $\varPhi$
using \xch{identity}{Identity} \reftext{(\ref{ca_local})}.

The Curtis--Hedlund--Lyndon theorem also holds for a cellular automaton
with memory, so that there is a local rule $\phi:(\mathcal
{A}^d)^{l+r+1}\rightarrow\mathcal{A}$ satisfying
\reftext{(\ref{ca_local})}, and conversely any such local rule defines
a cellular automaton with memory. We shall often use the fact that the
domain of a cellular automaton with memory $d$ is also $({\mathcal
A^{\mathbb Z}})^{d}$.

\begin{definition}\label{spacetime}
If $\varPhi:(\mathcal{A}^d)^\mathbb{Z}\rightarrow\mathcal{A}^\mathbb{Z}$
is a cellular automaton with memory~$d$, then a \emph{spacetime
diagram} for $\varPhi$ with initial conditions $R_{0}, \ldots, R_{d-1}$
is the sequence $(R_n)_{n \geq0}$ where we inductively define $R_{n}
:=\varPhi(R_{n-d}, \ldots, R_{n-1})$ for $n\geq d$.
\end{definition}

We visualize a spacetime diagram by letting time evolve down the page:
the $n$th row $R_{n}$ represents the configuration at time $n$, and
$R_{n}(m)$, the entry on \xch{row}{Row} $n$ and \xch{column}{Column}
$m$ of the spacetime diagram, is the state of the $m$th cell at
time~$n$. Whereas each row in an ordinary cellular automaton (with
memory $1$) is determined by the previous row, in a cellular automaton
with memory $d$ each row is determined by the previous $d$ rows. To be
brief, we will often use the term ``cellular automaton'' to mean ``a
spacetime diagram of the cellular automaton''.

Now suppose that $\mathcal A$ is the finite field $\mathbb F_{q}$. In
this case $\mathbb F_{q}^{\mathbb Z}$ and $(\mathbb F^d_{q})^{\mathbb
Z}$ are groups, with componentwise addition; they are also $\mathbb
F_{q}$-vector spaces.

\begin{definition} \label{linear}
We say that the cellular automaton $\varPhi:(\mathbb
F_{q}^{d})^{\mathbb Z}\rightarrow \mathbb F_{q}^{\mathbb Z}$ with
memory $d$ is \emph{linear} if $\varPhi$ is an $\mathbb F_{q}$-linear
map.
\end{definition}

Thus the Curtis--Hedlund--Lyndon theorem implies that the memory-$d$
cellular automaton $\varPhi$ is linear if and only if there exist
coefficients $f_{j,i}\in\mathbb F_{q}$ for $-l \leq j\leq r$ and $0\leq
i \leq d-1$ such that $(\varPhi(R_{0}, \ldots,R_{d-1}))(m) =
\sum_{i=0}^{d-1}\sum_{j=-l}^{r} f_{j,i} R_{i}(m+j) $ for all $R_{0},
\ldots, R_{d-1} \in\mathbb F_q^\mathbb Z$ and $m \in \mathbb Z$. An
example of a linear cellular automaton with memory $1$ is
\xch{rule}{Rule}~$90$, whose field is $\mathbb F_2$ and whose local
rule is $\phi(a,b,c) = a+c$. Begun from the initial condition $R_0$
where $R(0) = 1$ and $R(m) = 0$ for all $m \neq0$,
\xch{rule}{Rule}~$90$ computes the array of binomial coefficients
modulo~$2$. In fact, Pascal's triangle modulo $p$ is the spacetime
diagram of a linear cellular automaton with the corresponding initial
condition. These rules have been studied extensively in the literature,
as have been linear cellular automata in general; the algebraic
properties of the local rule yield much theoretic structure.

\begin{example}
\reftext{Fig.~\ref{Thue--Morse}} shows the first $256$ rows of the
spacetime diagram of a linear cellular automaton with memory $12$ over
$\mathbb F_2$, where $0$ is rendered as a white cell and $1$ is
rendered as a black cell. The ``center'' column, containing the top
vertex of the triangular region, consists of the Thue--Morse sequence.
This column is highlighted by rendering $0$ as red. It lies to the left
of a column which is identically zero. We compute the local rule for
this cellular automaton in Section~\ref{examples}.
\end{example}

\begin{figure}
	\begin{center}
		\scalebox{.8}{\includegraphics{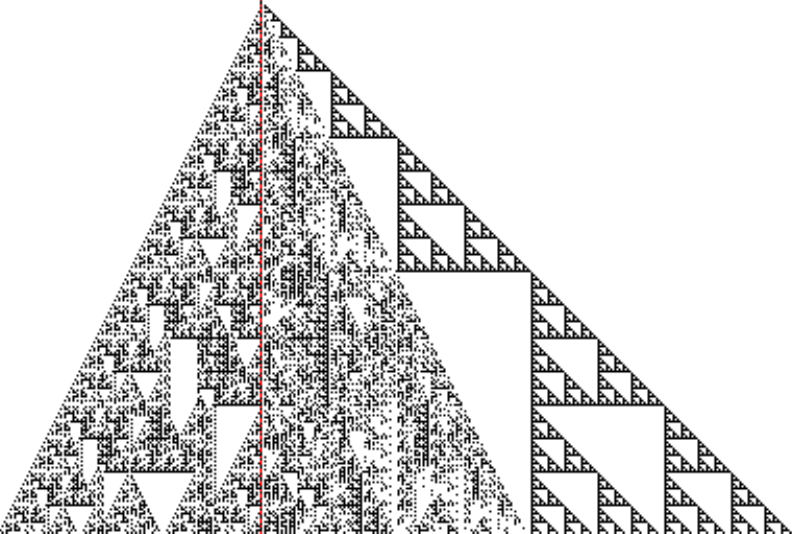}}
		\caption{Spacetime diagram of a linear cellular automaton with memory $12$ containing the Thue--Morse sequence as a column.}
		\label{Thue--Morse}
	\end{center}
\end{figure}

\subsection{Classical results}

We recall some results that we shall use. As before, we use $p$ to
denote a prime and $q$ to denote a power of~$p$. Let $\mathbb F_{q}$ be
the field of cardinality~$q$. If $(u_n)_{n \geq0}$ is $p$-automatic
generated using the DFAO $(\mathcal S, \varSigma_{p},\delta, s_{0},
\mathcal A, \omega)$, find $q$ such that $|\mathcal A|\leq q$. By
injecting $\mathcal A$ into $\mathbb F_{q}$, we can, and henceforth do,
assume that the output function $\omega$ has range in~$\mathbb F_q$.

Recall that $\mathbb F_{q}[t]$, $\mathbb F_{q}(t)$, and $\mathbb
F_{q}((t))$ are the sets of polynomials, rational functions, and formal
Laurent series respectively with coefficients in $\mathbb F_{q}$.
Elements of $\mathbb F_{q}((t))$ are expressions of the form
$F(t)=\sum_{n\geq n_{0}}u_{n}t^{n}$, where $u_{n}\in\mathbb F_{q}$ and
$n_{0}\in\mathbb Z$. We can also define polynomials, rational functions
and formal Laurent series in several variables. For~us, a formal
Laurent series in $t, x$ is an element of $\mathbb F_{q}((x))((t))$.
The Laurent series $F(t)$ is \emph{algebraic over $\mathbb F_{q}(t)$}
if there exists a nonzero polynomial $P(t, x) \in\mathbb F_{q}[t, x]$
such that $P(t, F(t)) = 0$. Finally we define the \emph{$q$-kernel} of
a sequence $(u_{n})_{n\geq 0}$ to be the collection of sequences
$\{(u_{q^{k}n+r})_{n\geq0}: k\geq0, \ 0\leq r\leq q^{k}-1\}$. The
equivalence of \xch{statements}{Statements}~1 and 2 in
\reftext{Theorem~\ref{Christol}} is known as Christol's theorem;
see~\cite{Christol,ckmr}. The equivalence of
\xch{statements}{Statements}~2 and 3 dates back to Eilenberg~\cite{ei}.

\begin{theorem}\label{Christol}
Let $(u_n)_{n \geq0}$ be a sequence of elements in $\mathbb F_q$. The
following are equivalent:
\begin{enumerate}
\item $F(t)=\sum_{n \geq0}u_{n}t^{n}$ is algebraic over
$\mathbb F_{q}(t)$.
\item The $q$-kernel of $(u_{n})_{n\geq0}$ is finite.
\item $(u_{n})_{n \geq0}$ is $q$-automatic.
\end{enumerate}
\end{theorem}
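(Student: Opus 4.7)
The plan is to invoke Theorem~\ref{characterization} to realize $\bu$ as a column of a memory-$d$ linear cellular automaton, re-encode this memory-$d$ machine as an ordinary linear cellular automaton over the larger alphabet $\F_q^d$, and then extract the factor map by reading the distinguished column under time evolution.

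First, apply Theorem~\ref{characterization} to obtain a linear memory-$d$ cellular automaton $\Psi : (\F_q^d)^{\mathbb Z} \to \F_q^{\mathbb Z}$ together with eventually periodic initial conditions $R_0, \ldots, R_{d-1} \in \F_q^{\mathbb Z}$ whose spacetime diagram $(R_n)_{n \geq 0}$ satisfies $u_n = R_n(0)$ for all $n \geq 0$. Using the identification $(\F_q^d)^{\mathbb Z} \cong (\F_q^{\mathbb Z})^d$, define
\[
\Phi(s_0, s_1, \ldots, s_{d-1}) \colonequal (s_1, s_2, \ldots, s_{d-1}, \Psi(s_0, \ldots, s_{d-1})).
\]
This $\Phi$ is $\F_q$-linear, continuous, and shift-commuting, and its local rule inherits the radii of $\Psi$, so it is an $\F_q$-linear cellular automaton on the alphabet $\F_q^d$.

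Second, encode the initial data as a single configuration $y_0 \in (\F_q^d)^{\mathbb Z}$ by setting $y_0(m) \colonequal (R_0(m), R_1(m), \ldots, R_{d-1}(m))$. A straightforward induction gives $\Phi^n(y_0)(m) = (R_n(m), R_{n+1}(m), \ldots, R_{n+d-1}(m))$ for all $n \geq 0$. Let $Y \colonequal \overline{\{\Phi^n(y_0) : n \geq 0\}} \subseteq (\F_q^d)^{\mathbb Z}$, which is closed and $\Phi$-invariant by construction, so $(Y, \Phi)$ is a subsystem. Define $\pi: Y \to \F_q^{\mathbb N}$ by $\pi(y)(n) \colonequal \Phi^n(y)(0)_0$, where $(\cdot)_0$ denotes projection onto the first $\F_q$-coordinate. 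Continuity of every iterate $\Phi^n$ together with continuity of coordinate projections gives continuity of $\pi$.

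Third, verify that $\pi : (Y, \Phi) \to (X_\bu, \sigma)$ is a topological factor map. The intertwining $\pi \circ \Phi = \sigma \circ \pi$ is immediate, since $\pi(\Phi(y))(n) = \Phi^{n+1}(y)(0)_0 = \pi(y)(n+1)$. The calculation $\pi(\Phi^n(y_0))(k) = R_{n+k}(0) = u_{n+k}$ shows that $\pi(\Phi^n(y_0)) = \sigma^n \bu$, so $\pi$ sends the $\Phi$-orbit of $y_0$ onto $\{\sigma^n \bu : n \geq 0\}$. Continuity of $\pi$ together with closedness of $X_\bu$ gives $\pi(Y) \subseteq X_\bu$, and compactness of $Y$ combined with density of $\{\sigma^n \bu : n \geq 0\}$ in $X_\bu$ forces $\pi(Y) \supseteq X_\bu$, whence $\pi$ is surjective.

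The main subtlety I anticipate is not algebraic but conceptual: one must cleanly separate the $\Phi$-action on $Y$ (the time evolution of the cellular automaton) from the $\sigma$-action on $X_\bu$ (the spatial shift on the column sequence), and use the $\Phi$-orbit closure rather than the larger joint $(\sigma,\Phi)$-orbit closure as $Y$, so that $\pi$ lands in $X_\bu$ itself rather than in a larger subshift generated by all columns of the spacetime diagram at once. Linearity of the cellular automaton is not used in the factor-map argument as such; it enters only through Theorem~\ref{characterization}, which supplies a \emph{linear} $\Psi$.
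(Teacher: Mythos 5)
There is a fundamental mismatch: the statement you were asked to prove is Theorem~\ref{Christol} (Christol's theorem, asserting the equivalence of (1) algebraicity of $F(t)=\sum_{n\geq 0}u_n t^n$ over $\F_q(t)$, (2) finiteness of the $q$-kernel, and (3) $q$-automaticity), but what you have written is a proof of Theorem~\ref{factor} --- the statement that $(X_\bu,\sigma)$ is a topological factor of a subsystem of a linear cellular automaton. Your argument (realize $\bu$ as a column via Theorem~\ref{characterization}, re-encode the memory-$d$ automaton as a memoryless one over $\F_q^{\,d}$, take the orbit closure $Y$ of the encoded initial configuration, and read off the distinguished column to get a factor map $\pi$) never touches any of the three equivalences in Theorem~\ref{Christol}: at no point do you relate algebraicity to the $q$-kernel, or the $q$-kernel to a DFAO. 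The paper itself does not prove Theorem~\ref{Christol}; it is a classical result, with the equivalence of (1) and (2) due to Christol (for $p=2$) and to Christol--Kamae--Mend\`es France--Rauzy in general, and the equivalence of (2) and (3) due to Eilenberg. A self-contained proof would need, for $(2)\Leftrightarrow(3)$, the standard construction of a DFAO whose states are the kernel sequences (with transitions $s\mapsto(u_{qn+r})$ on digit $r$ and output the $0$th term), and for $(1)\Leftrightarrow(2)$, an argument via the Cartier/section operators $\Lambda_r$ acting on the finite-dimensional $\F_q(t^q)$-span generated by an algebraic $F$, together with the converse linear-algebra step producing a polynomial relation among $F(t),F(t)^q,\dots,F(t)^{q^d}$ from a finite kernel. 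None of this appears in your proposal.

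Worse, even read charitably as an attempt to derive statement (3) from statement (1) through the cellular-automaton machinery, the route is circular within this paper. Your first step invokes Theorem~\ref{characterization}, but both directions of Theorem~\ref{characterization} rest on Theorem~\ref{Christol}: the proof of Theorem~\ref{automatic sequence construction} concludes ``by Theorem~\ref{Christol}, the sequence of entries in column $m$ is $p$-automatic,'' and the proof of Theorem~\ref{cellular automaton construction} starts from an algebraic equation for $F(t)$ whose existence is exactly Christol's theorem (via Proposition~\ref{technical_1} and Furstenberg's Proposition~\ref{prop_1}). So you cannot use any of this apparatus to prove Theorem~\ref{Christol} without assuming it. As a side remark, your construction is in fact a clean and correct alternative proof of Theorem~\ref{factor} (your explicit extraction map $\pi(y)(n)=\Phi^n(y)(0)_0$ avoids the paper's extend-by-uniform-continuity argument for $\Psi$), but it is a proof of the wrong statement.
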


Let  $E(t,x) = \sum_{n \geq n_{0}}
\sum_{m \in\mathbb Z} a_{n,m}t^{n}x^{m}$ be a formal Laurent series in two variables $t$ and~$x$.
In this case the \emph{diagonal} is the formal Laurent series
\begin{eqnarray*}
\sum_{n \geq n_{0}}a_{n,n}t^{n}
\end{eqnarray*}
in one variable.
Similarly we define the $m$th \emph{column} of $E(t,x)$ to be
\begin{eqnarray*}
\sum_{n \geq n_0} a_{n,m} t^{n}.
\end{eqnarray*}

We say that $E(t,x)$ is a \emph{rational} series if there exist
polynomials $Q(t,x)$ and $P(t,x)$ such that $P(t,x) E(t,x) = Q(t,x)$.
The following result is due to Furstenberg~\cite{Fur}.

\begin{theorem}\label{Furstenberg}
For a Laurent series $F(t) \in\mathbb F_q((t))$ to be algebraic over
$\mathbb F _q(t)$, it is necessary and sufficient that it is the
diagonal of a rational Laurent series $E(t,x) \in F_{q}((x))((t))$.
\end{theorem}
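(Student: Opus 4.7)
The plan is to prove the two directions of the equivalence separately, reducing the harder direction to a combinatorial construction via Theorem~\ref{Christol}.

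For sufficiency (diagonal of a rational is algebraic), the approach is to extract the diagonal as a constant term. Concretely, if $E(t,x)=P(t,x)/Q(t,x)$ with $Q(0,0)=1$, then formally the diagonal of $E$ equals the coefficient of $x^{0}$ in $E(t/x,x)$, viewed as a formal Laurent series in $x$ with coefficients in $\F_{q}((t))$. After clearing denominators in $x$, this is a coefficient-extraction problem for a rational function of $x$ over $\F_{q}(t)$, and standard partial-fraction arguments over the algebraic closure of $\F_{q}(t)$ show that such an extraction lies in a finite algebraic extension of $\F_{q}(t)$, from which algebraicity descends to $\F_{q}(t)$. An alternative route, in the spirit of characteristic $p$, is to consider the $\F_{q}(t)$-vector space $V$ of all diagonals of series of the form $R(t,x)/Q(t,x)^{M}$ with $R$ ranging over polynomials of bounded bidegree and $M$ fixed, show that $V$ is finite dimensional and stable under the Frobenius $F \mapsto F^{q}$, and then exhibit a nontrivial $\F_{q}(t)$-linear relation among the iterates $F, F^{q}, F^{q^{2}}, \ldots$, yielding an algebraic equation for $F$.

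For necessity (algebraic implies diagonal of a rational), the plan is to invoke Theorem~\ref{Christol}: the coefficient sequence $(u_{n})$ of $F$ is $q$-automatic, so it is generated by a DFAO $(\mathcal{S},\Sigma_{q},\delta,s_{0},\F_{q},\omega)$. From this DFAO I would construct $E(t,x)$ rational whose $(n,n)$ coefficient equals $u_{n}$. The key device is the standard kernel recursion: for each state $s\in\mathcal{S}$, set $F_{s}(t)=\sum_{n\geq 0}\omega(\delta(s,(n)_{q}))\,t^{n}$, and observe that $F_{s}(t)=\sum_{a=0}^{q-1} t^{a} F_{\delta(s,a)}(t^{q})$. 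I would then lift this one-variable system to two variables by defining series $E_{s}(t,x)$ whose diagonals are the $F_{s}$ and for which the substitution $t\mapsto t^{q}$ is replaced by a polynomial substitution in $(t,x)$; the lifted relations form a finite linear system with coefficients in $\F_{q}[t,x]$, and Cramer's rule over $\F_{q}(t,x)$ expresses each $E_{s}$, in particular $E_{s_{0}}$, as a rational function whose diagonal is $F$.

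The hardest step, I expect, is the lifting in the necessity direction: devising the two-variable series $E_{s}$ so that the nonlinear substitution $t\mapsto t^{q}$ in the one-variable recursion becomes linear with polynomial coefficients in $(t,x)$. This is where characteristic $p$ is essential, since in characteristic zero the diagonals of rational series form the strictly larger class of D-finite series. The mechanism by which base-$q$ digit extraction on the $t$-axis is realized by a rational denominator in $(t,x)$ --- intuitively by encoding the substitution $t\mapsto t^{q}$ as an extraction against a geometric factor in $x$ --- is the delicate part of the argument and must be executed with care to keep the resulting $E_{s}$ inside the rational class.
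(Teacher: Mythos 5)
Your sufficiency direction is essentially the classical residue argument and is sound in outline, with one caveat: in characteristic $p$ the algebraic closure of $\F_q((t))$ is strictly larger than the Puiseux field, so ``partial fractions over the algebraic closure of $\F_q(t)$'' must be carried out in a valued field extending the $t$-adic valuation, separating the roots of the denominator in $x$ into those of positive and nonpositive valuation before extracting the coefficient of $x^0$; your alternative Frobenius-stability argument is the standard characteristic-$p$ proof and also works. The genuine gap is in the necessity direction, at exactly the step you flag as hardest, and the mechanism you propose there cannot work as stated. The kernel relations $F_s(t)=\sum_{a=0}^{q-1}t^a F_{\delta(s,a)}(t^q)$ involve the unknowns at substituted arguments, and every natural two-variable lift preserves this feature: if $\mathrm{diag}\,E=G$, then $G(t^q)$ is the diagonal of $E(t^q x^{q-1},x)$, or equivalently of $E(t,x)^q=E(t^q,x^q)$, so the lifted system relates $E_s(t,x)$ to $E_{\delta(s,a)}(t^q x^{q-1},x)$ or to $E_{\delta(s,a)}(t,x)^q$. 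In neither form is this a linear system over $\F_q(t,x)$ in the unknowns $E_s(t,x)$, so Cramer's rule is inapplicable. The Frobenius-twisted version is a semilinear contraction in the $(t,x)$-adic topology with a unique power-series solution, but elimination only shows that solution is \emph{algebraic} over $\F_q(t,x)$, not rational, and ``diagonal of an algebraic bivariate series'' is weaker than what you must prove; the substitution version is a two-variable Mahler system, whose solutions are likewise not rational in general. So your plan, as written, does not reach rationality of $E$, which is the whole content of the theorem.

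The paper does not reprove this theorem; it quotes Furstenberg's two-step argument, which is the route you should take for necessity and which requires neither an automaton nor Theorem~\ref{Christol} (invoking Christol is not circular here, but it is far heavier than needed). Proposition~\ref{prop_1} replaces $F$ by a tail $G$ with $G(0)=0$ satisfying $P(t,G(t))=0$ for $P(t,x)=A_0(t)x+A_1(t)x^p+\cdots+A_m(t)x^{p^m}+B(t)$ with $A_0(0)\neq 0$; since the derivatives of the $x^{p^i}$ terms vanish in characteristic $p$, $P_x=A_0$ and hence $P_x(0,0)\neq 0$. Proposition~\ref{prop_2} then exhibits $G$ explicitly as the diagonal of the rational series $x^2P_x(tx,x)/P(tx,x)$, and $F=R(t)+t^r G(t)$ is recovered as the diagonal of $R(tx)+(tx)^r E(t,x)$. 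One further correction: your closing remark that characteristic $p$ is what makes the bivariate statement true is mistaken --- the diagonal of a rational series in \emph{two} variables is algebraic over any field (P\'olya, Furstenberg), and D-finite-but-transcendental diagonals only appear for three or more variables; characteristic $p$ is essential for Christol's theorem and for the multivariate generalizations, not for the bivariate equivalence at issue.
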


The necessary direction of Furstenberg's theorem follows from the
following two propositions in \cite{Fur}, which we will use in
Section~\ref{columns}.

\begin{proposition}\label{prop_1}
Suppose that the Laurent series $F(t) = \sum_{n \geq n_0} c_n t^n \in
\mathbb F_{q}((t))$ is algebraic over $\mathbb F_q(t)$. Then there
exists $r^* \geq n_0$, $m \geq0$, and a polynomial
\begin{eqnarray*}
P^*(t,x) = A^*_{0}(t)x+A^*_{1}(t)x^{p}+
\cdots+A^*_{m}(t)x^{p^m}+B^*(t),
\end{eqnarray*}
with $A^*_{i}(t), B^*(t) \in\mathbb F_q[t]$ and $A^*_{0}(t)$ not
divisible by~$t$, such that $F(t) = R^*(t) + t^{r^*} G^*(t)$, $R^*(t) =
\sum_{n=n_0}^{r^*-1} c_n t^n$, and $P^*(t,G^*(t))=0$.
\end{proposition}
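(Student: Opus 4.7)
The plan is to use the algebraicity of $F$ to produce a $K$-linear dependence among iterated Frobenius powers of $F$, transfer the relation to $G^*$ through the substitution $F = R^* + t^{r^*} G^*$ (exploiting the characteristic-$p$ identity $(u+v)^{p^i} = u^{p^i} + v^{p^i}$), and normalize by dividing out a common power of $t$.

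Since $F$ is algebraic over $K := \F_q(t)$, the iterated Frobenius powers $F, F^p, F^{p^2}, \ldots$ all lie in the finite-dimensional $K$-vector space $K(F)$. Hence, for $m$ sufficiently large, the elements $1, F, F^p, \ldots, F^{p^m}$ are $K$-linearly dependent, and after clearing denominators one obtains
\[
	A_0(t) F + A_1(t) F^p + \cdots + A_m(t) F^{p^m} + B(t) = 0
\]
with $A_i, B \in \F_q[t]$. The key step, and the principal obstacle of the argument, is to arrange this relation so that $A_0 \neq 0$; this is a delicate point resting on the fact that $F$ is separable over $K$ (any algebraic $F \in \F_q((t))$ must be separable, since $\F_q((t))^{p} = \F_q((t^p))$ meets $K$ only in $K^p$, so every purely inseparable extension of $K$ inside $\F_q((t))$ is trivial), which places $F$ in the $K$-linear span of $\{1, F^p, F^{p^2}, \ldots\}$.

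Write $v := v_t(A_0)$ and choose $r^*$ large enough that $v + r^* < v_t(A_i) + r^* p^i$ for every $i \geq 1$ with $A_i \neq 0$; this is possible because $p^i - 1 \geq 1$. Set $R^*(t) := \sum_{n = n_0}^{r^*} c_n t^n$, so that $G^*(t) := t^{-r^*}(F(t) - R^*(t))$ lies in $t\,\F_q[[t]]$. Substituting $F = R^* + t^{r^*} G^*$ and using $(R^* + t^{r^*} G^*)^{p^i} = (R^*)^{p^i} + t^{r^* p^i}(G^*)^{p^i}$ turns the relation into
\[
	C(t) + A_0(t)\,t^{r^*} G^* + A_1(t)\,t^{r^* p}(G^*)^p + \cdots + A_m(t)\,t^{r^* p^m}(G^*)^{p^m} = 0,
\]
where $C(t) := \sum_{i=0}^{m} A_i(t)(R^*(t))^{p^i} + B(t) \in \F_q[t]$.

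Since $G^*$ vanishes at $t = 0$, each $(G^*)^{p^i}$ has $t$-valuation at least $p^i$, so the $i$-th summand has $t$-valuation at least $v_t(A_i) + r^* p^i + p^i$; by the choice of $r^*$, this minimum equals $v + r^* + 1$ and is attained uniquely at $i = 0$, forcing $t^{v + r^*}$ to divide $C(t)$. Dividing the entire identity through by $t^{v + r^*}$ then produces
\[
	P^*(t,x) = \frac{A_0(t)}{t^v}\,x + \sum_{i=1}^{m} A_i(t)\,t^{r^* p^i - v - r^*}\,x^{p^i} + \frac{C(t)}{t^{v + r^*}},
\]
which satisfies $P^*(t, G^*) = 0$ and is of the required form: the coefficient $A_0^*(t) := A_0(t)/t^v$ has nonzero constant term by definition of $v$, and the remaining coefficients lie in $\F_q[t]$, the exponents $r^* p^i - v - r^*$ being nonnegative for $r^*$ large and $C(t)/t^{v + r^*}$ being a polynomial by the divisibility just established.
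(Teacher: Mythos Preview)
The paper does not prove this proposition; it is quoted from Furstenberg~\cite{Fur} and only applied later. So there is no in-paper argument to compare yours against, and I evaluate your proof on its own.

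Your overall strategy --- obtain an Ore-type relation $\sum A_i F^{p^i}+B=0$, substitute $F=R^*+t^{r^*}G^*$, and divide out the common power of $t$ --- is exactly the right one, and the valuation bookkeeping in the last two paragraphs is correct. The one genuine gap is the step where you arrange $A_0\neq 0$. You correctly argue that $F$ is separable over $K=\F_q(t)$, and you then assert that this ``places $F$ in the $K$-linear span of $\{1,F^p,F^{p^2},\ldots\}$''. But separability by itself only gives $F\in K(F^p)$, and $K(F^p)$ is spanned over $K$ by $1,F^p,F^{2p},\ldots,F^{(d-1)p}$, \emph{not} by the iterated Frobenius powers $F^{p^i}$; these two $K$-subspaces of $K(F)$ need not coincide. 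So the implication as written does not follow.

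The claim is true, and here is one way to close the gap. Take $n$ minimal with $1,F,F^p,\ldots,F^{p^n}$ linearly dependent over $K$, say $b+\sum_{i=0}^{n}a_iF^{p^i}=0$ with $a_n\neq 0$. If $a_0=0$, each summand is a $p$-th power in $K^{1/p}(F)$ (a field, since $K(F)/K$ is separable and $K^{1/p}/K$ is purely inseparable, so they are linearly disjoint), and taking $p$-th roots gives $b^{1/p}+\sum_{j=0}^{n-1}a_{j+1}^{1/p}F^{p^j}=0$. Thus $1,F,\ldots,F^{p^{n-1}}$ are $K^{1/p}$-dependent; but they are $K$-independent by minimality of $n$, hence $K^{1/p}$-independent --- contradiction. (Over $K=\F_q(t)$ one can alternatively differentiate the relation and use $\ker(d/dt)=K^p$ to manufacture a shorter relation.)

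Two minor remarks. When $n_0<0$, your $R^*$ is only a Laurent polynomial, so $C(t)$ is \emph{a priori} only a Laurent polynomial; you should note that $C=-\sum_i A_i\,t^{r^*p^i}(G^*)^{p^i}\in\F_q[[t]]$ forces $C\in\F_q[t]$. And your phrase ``this minimum equals $v+r^*+1$'' refers to the minimum of the termwise lower bounds, not to $v_t(C)$ itself (which may be larger if $v_t(G^*)>1$); but only $t^{v+r^*}\mid C$ is needed, and that follows.
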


\reftext{Proposition~\ref{prop_1}} is similar to a result known as Ore's
lemma~\cite[Lemma~12.2.3]{ash}.

Let $P^{(0,1)}$ denote the derivative of a function $P$ with respect to
its second argument.

\begin{proposition}\label{prop_2}
Suppose that the series $G(t) = \sum_{n \geq1} c_n t^n \in\mathbb
F_q((t))$ with $G(0) = 0$ satisfies $P(t,G(t))=0$, where $P(t,x)
\in\mathbb F_{q}[t,x]$ and $P^{(0,1)} (0,0)\neq0$. Then $G(t)$ is the
diagonal of the unique series expansion of
\begin{eqnarray*}
\frac{x^{2}P^{(0,1)}(tx,x)}{P(tx,x)}.
\end{eqnarray*}
\end{proposition}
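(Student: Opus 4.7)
The plan is to compute the diagonal of $E(t,x) \colonequal \frac{x^{2}P_x(tx,x)}{P(tx,x)}$ directly by splitting it into two pieces via a factorization of $P$ and computing the diagonal of each piece separately. First, I would verify that $E$ is a bivariate formal power series in $\F_q[[t,x]]$, which also justifies the word ``unique'' in the statement: since $P(0,0)=0$ but $P_x(0,0)\neq 0$, the polynomial $P(tx,x)$ has zero constant term and is divisible by $x$ in $\F_q[t,x]$, say $P(tx,x)=x\,Q(t,x)$ with $Q(0,0)=P_x(0,0)\neq 0$, so $1/Q$ is a bivariate power series and $E = x\,P_x(tx,x)/Q$ belongs to $\F_q[[t,x]]$.

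Because $(x-G(t))$ is monic and linear in $x$ over $\F_q[[t]]$, polynomial division gives $P(t,x)=(x-G(t))\,R(t,x)$ in $\F_q[[t]][x]$; a short computation shows $R(0,0)=P_x(0,0)\neq 0$. Substituting $t \mapsto tx$ and using $P_x(t,x) = R(t,x) + (x-G(t))R_x(t,x)$ produces the splitting
\[
E(t,x) \;=\; \frac{x^{2}}{x-G(tx)} \;+\; \frac{x^{2} R_x(tx,x)}{R(tx,x)}.
\]
Call the two summands $A(t,x)$ and $B(t,x)$.

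For the diagonal of $A$, I factor $G(u)=u\,H(u)$ with $H(u) \in \F_q[[u]]$ and $H(0)=c_1$. Then $A(t,x) = \frac{x}{1-t\,H(tx)} = x\sum_{k\geq 0} t^k H(tx)^k$. Since $H(tx)^k$ is a series in the single variable $tx$, every monomial of $H(tx)^k$ has equal $t$- and $x$-degrees. Extracting $[t^n x^n]$ from $A$ therefore forces $k=1$, and the surviving contribution is the coefficient of $(tx)^{n-1}$ in $H(tx)$, which equals $c_n$. Hence $\mathrm{Diag}(A)(t)=G(t)$.

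The main obstacle is showing $\mathrm{Diag}(B)=0$. The key observation is that both $R(tx,x)$ and $R_x(tx,x)$ lie in the subring of $\F_q[[t,x]]$ whose monomial supports lie in $\{(i,j) : j\geq i\}$: each monomial $s^a y^b$ appearing in $R(s,y)$ or $R_y(s,y)$ becomes $t^a x^{a+b}$ under the substitutions $s \mapsto tx$, $y \mapsto x$. I would then verify that this subring, which may be identified with the formal power series ring $\F_q[[tx]][[x]]$, is closed under inversion of units---either by induction on total degree, or by regarding it as a formal power series ring in $x$ over the complete local ring $\F_q[[tx]]$. Granted this, $R_x(tx,x)/R(tx,x)$ still has support in $\{j \geq i\}$, and multiplication by $x^2$ shifts this support to $\{j \geq i+2\}$, which is disjoint from the diagonal $\{j=i\}$. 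Every diagonal coefficient of $B$ therefore vanishes, giving $\mathrm{Diag}(E) = \mathrm{Diag}(A) = G(t)$.
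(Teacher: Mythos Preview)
Your argument is correct and follows the same route as Furstenberg's proof, which the paper cites for this proposition rather than proving it directly. The paper later alludes to that proof (just after Theorem~\ref{cellular automaton construction}) and it is exactly the factorization $P(t,x)=(x-G(t))R(t,x)$ with the logarithmic-derivative splitting $\frac{P_x}{P}=\frac{1}{x-G}+\frac{R_x}{R}$ that you employ; your support analysis showing $\mathrm{Diag}(B)=0$ via the subring isomorphic to $\F_q[[tx,x]]$ is the standard way to finish.
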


\section{Columns of linear cellular automata}\label{columns}

In this section we prove \reftext{Theorem~\ref{characterization}}.
\reftext{Theorems~\ref{Christol} and \ref{Furstenberg}} are main
ingredients. To work with spacetime diagrams algebraically, we
represent the spacetime diagram of a cellular automaton as a bivariate
series, a technique that, to our knowledge, was first used by Martin,
Odlyzko, and Wolfram \cite{Martin--Odlyzko--Wolfram}. If $a_{n,m}$ is
the entry of the spacetime diagram on row $n \in \mathbb N$ and column
$m \in\mathbb Z$, then the series $E(t,x) = \sum_{n \geq 0} \sum_{m
\in\mathbb Z} a_{n,m} t^n x^m$ encodes the entire cellular automaton
evolution from the initial condition $R_{0}$. We identify the $n$th row
$R_n$ of the spacetime diagram with the series $R_n(x) = \sum_{m
\in\mathbb Z} a_{n,m} x^m$, which is the coefficient of $t^n$ in
$E(t,x) = \sum_{n \geq0} R_n(x) t^n$.

We begin with the easier direction of
\reftext{Theorem~\ref{characterization}}. Litow and Dumas~\cite{ld}
established the case where the initial condition is zero everywhere
save the central entry and the memory is $1$. The proof of the more
general statement is similar.

\begin{theorem}\label{automatic_sequence_construction}
Let $\varPhi$ be a linear cellular automaton rule with memory $d$ over
$\mathbb F_q$. Let $R_0, \dots, R_{d-1} \in\mathbb F_q^\mathbb Z$ be
rows that are eventually periodic in both directions. For each $m
\in\mathbb Z$, the column sequence $(R_n(m))_{n \geq0}$ is
$p$-automatic.
\end{theorem}

\begin{proof}
The linearity of $\varPhi$ is equivalent to the existence of Laurent
polynomials $C_1(x), \dots, C_d(x)$ such that for all $n \geq d$
\begin{eqnarray*}
R_n(x) = \sum_{i=1}^d
C_i(x) R_{n-i}(x).
\end{eqnarray*}
Let $C_0(x) :=-1$; then $\sum_{i=0}^d C_i(x) R_{n-i}(x) = 0$
for all $n \geq d$, so
\begin{eqnarray*}
\Left{Biggl}(\,\sum_{i=0}^d
C_i(x) t^i \Right{Biggr}) E(t,x) &=& \Left{Biggl}(\,\sum
_{i=0}^d C_i(x) t^i \Right{Biggr}) \Left{biggl}(\,\sum_{j \geq0}
R_j(x) t^j \Right{biggr})
\\
&=& \sum_{n \geq0} \Left{Biggl}(\,\sum_{i=0}^d C_i(x) R_{n-i}(x)
\Right{Biggr}) t^n
\\
&=& \sum_{n=0}^{d-1} \Left{Biggl}(\,\sum
_{i=0}^d C_i(x)R_{n-i}(x) \Right{Biggr}) t^n.
\end{eqnarray*}
Each $R_{n-i}(x)$ is a rational expression since it is the sum of two
one-sided eventually periodic series. Therefore $ (\sum_{i=0}^d C_i(x)
t^i ) E(t,x)$ is a rational expression in $t$ and~$x$, and since
$\sum_{i=0}^d C_i(x) t^i$ is also rational this implies that $E(t,x)$
is also rational. Column $m$ of $E(t,x)$ is the diagonal of $x^{-m} E(t
x, x)$ and therefore by \reftext{Theorem~\ref{Furstenberg}} is
algebraic over~$\mathbb F_q$. Hence, by
\reftext{Theorem~\ref{Christol}}, the sequence of entries in column $m$
is $p$-automatic.
\end{proof}

For the other direction of \reftext{Theorem~\ref{characterization}}, we
basically reverse the steps of the previous proof. The difficulty
arises in obtaining a recurrence $\sum_{i=0}^d C_i(x) R_{n-i}(x) = 0$
in which $C_0(x)$ is a (nonzero) monomial. It is necessary that
$C_0(x)$ is a monomial so that each $\frac {C_i(x)}{C_0(x)}$ is a
Laurent polynomial, and hence the update rule that determines the value
of each cell is local. Since the coefficients $C_i(x)$ come from the
denominator of $E(t,x)$ and the denominator of $E(t,x)$ comes (by
\reftext{Theorem~\ref{Furstenberg}}) from a polynomial equation
satisfied by $F(t)$, we seek a polynomial $P(t,x)$ where the
coefficient of $t^0$ is a nonzero monomial in $x$, and where $P(t,
F(t)) = 0$. The following proposition allows us to find such a
polynomial.

\begin{proposition}\label{technical_1}
Suppose that $F(t) = \sum_{n\geq0} u_{n}t^{n} \in\mathbb F_{q}\llbracket t\rrbracket$ is
algebraic over $\mathbb F_{q}(t)$. Then there exist $G(t)\in\mathbb
F_{q}\llbracket t\rrbracket$ and $P(t, x)\in\mathbb F_{q}[t,x]$ of the form
\begin{eqnarray*}
P(t,x) = A_{0}(t) x +A_{1}(t) x^{p} +
\cdots+A_m(t)x^{p^m}+B(t)
\end{eqnarray*}
with $A_{i}(t), B(t) \in\mathbb F_{q}[t]$ for $0 \leq i \leq m$ such that
\begin{enumerate}
\item $F(t) = R(t) + t^r G(t)$ for some $r \geq0$ and $R(t) \in
\mathbb F_{q}[t]$,
\item $G(0)=0$,
\item $A_{0}(0) \neq0$,
\item $B(0)=A_{i}(0) = 0$ for $1\leq i \leq m$, and
\item $P(t, G(t)) = 0$.
\end{enumerate}
\end{proposition}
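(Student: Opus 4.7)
My plan is to bootstrap Proposition~\ref{prop_1} and then fix up the constant terms with a single additional shift of $F(t)$. Proposition~\ref{prop_1} already produces $r^*$, $R^*(t)$, $G^*(t)$, and
$P^*(t,x) = A_0^*(t) x + \sum_{i=1}^{m} A_i^*(t) x^{p^i} + B^*(t)$
with $A_0^*(0) \neq 0$ and $P^*(t, G^*(t)) = 0$. Because $R^*(t) = \sum_{n=n_0}^{r^*} c_n t^n$ absorbs every coefficient of $F(t)$ through degree $r^*$, the series $G^*(t) = \sum_{k \geq 1} c_{r^*+k} t^k$ has positive valuation, so $G^*(0) = 0$, and evaluating $P^*(t, G^*(t)) = 0$ at $t = 0$ forces $B^*(0) = 0$. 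Thus properties~(1), (2), (3), (5), and the $B(0) = 0$ half of (4) are already in hand. The only thing missing is that each $A_i^*(t)$ for $i \geq 1$ may have a nonzero constant term.

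To kill those constant terms, I would shift $r^*$ up by one: let $\alpha = c_{r^*+1}$, $r = r^* + 1$, $R(t) = R^*(t) + \alpha t^r$, and define $G(t)$ by $F(t) = R(t) + t^r G(t)$, so that $G(0) = 0$ and $G^*(t) = t G(t) + \alpha t$. Substituting this identity into the relation $P^*(t, G^*(t)) = 0$ and using the Frobenius rule $(a+b)^{p^i} = a^{p^i} + b^{p^i}$ in characteristic $p$, each occurrence of $G(t)^{p^i}$ picks up a clean factor of $t^{p^i}$ while the $\alpha$ pieces collect into
\[
  \tilde B(t) \;=\; B^*(t) + \alpha t A_0^*(t) + \sum_{i=1}^{m} \alpha^{p^i} t^{p^i} A_i^*(t),
\]
giving
\[
  t A_0^*(t) G(t) \;+\; \sum_{i=1}^{m} t^{p^i} A_i^*(t) G(t)^{p^i} \;+\; \tilde B(t) \;=\; 0.
\]
The key observation is that $\tilde B(0) = B^*(0) = 0$, so $\tilde B(t)$ is divisible by $t$ and the whole identity can be divided by $t$. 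Setting $A_0(t) = A_0^*(t)$, $A_i(t) = t^{p^i - 1} A_i^*(t)$ for $1 \leq i \leq m$, and $B(t) = \tilde B(t)/t$ then yields $P(t, G(t)) = 0$ with $A_0(0) = A_0^*(0) \neq 0$, while each $A_i$ for $i \geq 1$ is divisible by $t^{p^i - 1}$ and hence vanishes at $t = 0$ (using $p \geq 2$). Evaluating $P(t, G(t)) = 0$ at $t = 0$ together with $G(0) = 0$ then forces $B(0) = 0$, completing all five conditions.

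I do not anticipate a serious obstacle beyond keeping the Frobenius bookkeeping tidy enough that the $t$-divisibility of $\tilde B$ is manifest; the one thing worth flagging is that edge cases (for instance, $F$ already a polynomial, or $\alpha = 0$) need no separate treatment, since the shift argument runs uniformly and simply produces $G \equiv 0$ in trivial situations.
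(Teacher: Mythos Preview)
Your proposal is correct and follows essentially the same route as the paper: invoke Proposition~\ref{prop_1}, shift the index by one (substituting $G^*(t)=t(G(t)+\alpha)$), use Frobenius to pick up a factor $t^{p^i}$ in front of each $G(t)^{p^i}$, and then divide through by $t$ so that $A_i(t)=t^{p^i-1}A_i^*(t)$ vanishes at $0$ for $i\geq 1$. The paper's write-up differs only cosmetically---it takes $G^*$ with a possibly nonzero constant term and therefore substitutes $G^*(t)=u_{r-1}+u_r t+tG(t)$, and it argues $t^2\mid B^{**}$ directly from $t\mid G$ rather than by evaluating at $t=0$---but the mechanism and output are identical.
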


\begin{proof}
\reftext{Proposition~\ref{prop_1}} tells us that for some $r^{*}\geq0$ there
exist polynomials $A_i^*(t), B^*(t) \in\mathbb F_{q}[t]$ such that
$A_{0}^{*}(0)\neq0$ and $G^{*}(t) = \sum_{n\geq0} u_{n+r^{*}}t^{n}$
satisfies $P^{*}(t,G^{*}(t)) =0$, where
\begin{eqnarray*}
P^{*}(t, x) = A_{0}^{*}(t) x +
A_{1}^{*}(t) x^{p} + \cdots+
A_m^{*}(t) x^{p^m} + B^{*}(t).
\end{eqnarray*}
Let $r = r^* + 1$ and $R(t) = u_0 + u_1 t + \cdots+ u_{r} t^{r}$. Write
$G^*(t) = u_{r - 1} + u_r t + t G(t)$ so that $G(t) = \sum_{n \geq1}
u_{n + r} t^n$. Expanding $P^*(t, u_{r - 1} + u_r t + t x)$ and using
that $(a+b)^p = a^p + b^p$ in characteristic $p$ shows that $x = G(t)$
satisfies
\begin{eqnarray*}
A_{0}^{*}(t) t x + A_{1}^{*}(t) t^p x^p + \cdots+ A_m^{*}(t) t^{p^m}
x^{p^m} + B^{**}(t) = 0
\end{eqnarray*}
for some polynomial $B^{**}(t)$. Since each formal power series
\[
A_i^*(t) t^{p^i} G(t)^{p^i} = A_i^*(t) t^{2 p^i} \Left{bigl}(G(t)/t\Right{bigr})^{p^i}
\]
 is divisible by $t^2$, $B^{**}(t)$ is also divisible by $t^2$.
Dividing by~$t$, let $A_i(t) = A_i^*(t) t^{p^i - 1}$, $B(t) =
B^{**}(t)/t$, and
\begin{eqnarray*}
P(t, x) = A_0(t) x + A_1(t) x^p + \cdots+
A_m(t) x^{p^m} + B(t).
\end{eqnarray*}
One verifies that the conclusions of the proposition are satisfied.
\end{proof}

We may now prove the other direction of \reftext{Theorem~\ref{characterization}}.

\begin{theorem}\label{cellular_automaton_construction}
If $(u_n)_{n \geq0}$ is a $p$-automatic sequence of elements in
$\mathbb F _q$, then $(u_n)_{n\geq0}$ occurs as a column of a linear
cellular automaton with memory over $\mathbb F_{q}$, whose initial
condition rows have finitely many nonzero entries.
\end{theorem}

\begin{proof}
By \reftext{Proposition~\ref{technical_1}}, there exist $G(t)$ and
$P(t,x)$ with $A_0(t)$ and $r$ as described. Using the notation of that
proposition, we shall show that the shifted sequence $u_{r+1},
u_{r+2},\dots$ can be found as a column of a spacetime diagram of a
linear cellular automaton with memory, and then we describe how to
reinstate the initial terms. Let us write $P(t,x) =
\sum_{i=0}^{d}C_{i}(x)t^{i}$ where $C_{i}(x)\in \mathbb F_{q}[x]$.
Conclusion~4 of \reftext{Proposition~\ref{technical_1}} implies that
$C_{0}(x) = A_{0}(0)x$, and \xch{conclusion}{Conclusion}~3 is that
$A_0(0) \neq0$.

Conclusions~2, 5, and 3 of \reftext{Proposition~\ref{technical_1}}
imply that the conditions of \reftext{Proposition~\ref{prop_2}} are
met. Therefore $G(t)$ is the diagonal of $\frac{x^{2}
P^{(0,1)}(tx,x)}{P(tx,x)}$. It follows that $G(t)$ is
\xch{column}{Column} $-2$ of $\frac{P^{(0,1)}(t,x)}{P(t,x)}$. Since the
coefficient of $t^0$ in $P(t,x)$ is the monomial $A_0(0) x$, write
$P(t,x) = A_0(0) x + t Q(t,x)$ where $Q(t,x) \in\mathbb F_q[t,x]$. Then
expand
\begin{eqnarray*}
\frac{P^{(0,1)}(t,x)}{P(t,x)} &=& \frac{P^{(0,1)}(t,x)}{A_0(0) x}
\cdot\frac{1}{1 + \frac{t Q(t,x)}{A_0(0) x}} =
\frac{P^{(0,1)}(t,x)}{A_0(0) x} \sum_{n \geq0} \Left{biggl}(-\frac
{Q(t,x)}{A_0(0) x} \Right{biggr})^n t^n \\
&=& \sum_{n \geq0} R_n(x) t^n
\end{eqnarray*}
as a series in $t$. Each $R_{n}(x)$ is a Laurent polynomial.

Create a two-dimensional array where the entry in \xch{row}{Row} $n
\in\mathbb N$ and \xch{column}{Column} $m \in\mathbb Z$ is the
coefficient of $x^m$ in $R_n(x)$. By
\reftext{Proposition~\ref{prop_2}}, \xch{column}{Column} $-2$ of this
array consists of the sequence $0, u_{r+1}, u_{r+2}, \dots$. It remains
to show that the array is the spacetime diagram of a cellular automaton
with memory and to then restore the terms $u_0, u_1, \dots, u_{r}$.

Since the series $\sum_{n \geq0} R_n(x) t^n$ is rational, the sequence
$(R_n(x))_{n \geq0}$ satisfies a linear recurrence with coefficients
$C_0(x), \dots, C_d(x)$. Namely, multiplying both sides by $P(t,x)$
gives
\begin{eqnarray*}
P^{(0,1)}(t,x) &=& \sum_{i=0}^{d}C_{i}(x)t^{i} \sum_{j \geq0} R_j(x)
t^{j} = \sum_{n \geq0} \Left{biggl}(\,\sum_{i+j=n} C_i (x)R_j(x)
\Right{biggr}) t^n
\\
&=& \sum_{n=0}^d \Left{Biggl}(\,\sum_{i=0}^n C_i(x)
R_{n-i}(x) \Right{Biggr}) t^n + \sum
_{n \geq d+1} \Left{Biggl}(\,\sum_{i=0}^d
C_i(x) R_{n-i}(x) \Right{Biggr}) t^n,
\end{eqnarray*}
and since $P^{(0,1)}(t,x)$ is a polynomial with $\deg_t P^{(0,1)}(t,x)
\leq d$, we have
\[
\sum_{i=0}^d C_i(x) R_{n-i}(x) = 0
\]
for all $n \geq d + 1$. Solving for $R_n(x)$ gives
\begin{eqnarray*}
R_n (x)= -\sum_{i=1}^d
\frac{C_{i}(x)}{C_{0}(x)} R_{n-i}(x)
\end{eqnarray*}
for all $n \geq d + 1$, where each $\frac{C_{i}(x)}{C_{0}(x)} $ is a
Laurent polynomial in~$x$. Therefore the coefficient of $x^m$ in
$R_n(x)$ depends only on the coefficients of $x^{m + 1 - \max_{i} \deg
C_i(x)}, \dots, x^{m+1}$ in $R_{n-1}(x), \dots , R_{n-d}(x)$. In
particular, each entry of the two-dimensional array is computed by the
same local rule. Therefore the coefficients of $R_1(x), R_2(x), \dots$
form the rows of a spacetime diagram of a cellular automaton with
memory $d$, where the first $d$ rows consist of initial conditions and
rows $R_{d+1}, R_{d+2}, \dots$ are computed by the local rule. Remove
row $R_0$, since no other rows depend on it and the coefficient of
$x^{-2}$ in $R_0(x)$ is $0$ rather than $u_r$. Finally, we restore the
initial $r+1$ terms of $(u_n)_{n \geq0}$. We do this by redefining
$R_0(x) :=u_r x^{-2}$ and defining $R_{-r}(x) :=u_0 x^{-2}, R_{1-r}(x)
:=u_1 x^{-2}, \dots, R_{-1}(x) :=u_{r-1} x^{-2}$. We  trivially
increase the memory of the local rule from $d$ to $d + r + 1$ without
actually introducing dependence of $R_n$ on rows $R_{n-(d+1)}, \dots,
R_{n-(d+r+1)}$. Then the local cellular automaton rule with memory $d +
r + 1$, run from initial conditions $R_{-r}, \dots, R_d$, produces a
spacetime diagram where the sequence $(u_n)_{n \geq0}$ occurs in
\xch{column}{Column}~$-2$.
\end{proof}

The construction in the proof of
\reftext{Theorem~\ref{cellular_automaton_construction}} gives us some
additional information about the spacetime diagram. For example, since
$C_0(x) = A_0(0) x$ has degree $1$, the right radius of the local rule
is at most~$1$. Therefore the left boundary of the nonzero triangular
region in the spacetime diagram grows with speed at most $1$ cell per
step.

Additionally, \xch{column}{Column} $-1$ is identically $0$. This column
can be seen, for example, immediately to the right of the Thue--Morse
column in \reftext{Fig.~\ref{Thue--Morse}} (and helps the reader
identify the location of the desired sequence in the rest of our
diagrams). To see that this is the case, factor $P(t,x) = (x - G(t))
Q(t,x)$ for some $Q(t,x) \in\mathbb F_q((t))[x]$, following the proof
of \reftext{Proposition~\ref{prop_2}} in \cite{Fur}. Then we have
\begin{eqnarray*}
\frac{P^{(0,1)}(t,x)}{P(t,x)} = \frac{1}{x - G(t)} + \frac
{Q^{(0,1)}(t,x)}{Q(t,x)}.
\end{eqnarray*}
These two summands contain the entries of the two halves of the
spacetime diagram. Since $Q(0,0) \neq0$, the exponent of $x$ in each
nonzero term of the series $\frac{Q^{(0,1)}(t,x)}{Q(t,x)}$ is
nonnegative. Moreover, the only nonzero term in the series $\frac{1}{x
- G(t)} = \frac{1}{x} \sum_{n \geq0} (\frac{1}{x} G(t))^n$ whose
exponent of $x$ is greater than $-2$ is $\frac{1}{x}$, which appears in
$R_0(x)$, which we removed in the proof of
\reftext{Theorem~\ref{cellular_automaton_construction}}.

In Section~\ref{examples} we discuss an algorithm to generate the
polynomial $P(t,x)$ of \reftext{Proposition~\ref{technical_1}} and thus
the cellular automaton as constructed in
\reftext{Theorem~\ref{cellular_automaton_construction}}. We also work
through some examples. First though we mention a few corollaries.

\begin{corollary}\label{without_memory}
If $(u_n)_{n \geq0}$ is a $p$-automatic sequence, then $(u_n)_{n\geq
0}$ is the letter-to-letter projection of a sequence $(v_n)_{n\geq0}$ which
occurs as a column of a linear cellular automaton (without memory)
whose initial condition is eventually periodic in both directions.
\end{corollary}

\begin{proof}
\reftext{Theorem~\ref{cellular_automaton_construction}} guarantees the
existence of a linear rule $\varPhi$ with memory $d+r+1$ such that
$(u_{n})_{n\geq 0}$ occurs as a column of some spacetime diagram of
$\varPhi$. Wrap every $(d+r+1)$-tuple of consecutive cells in each
column into a single cell. In other words, consider the new alphabet
$\mathbb F _{q}^{d+r+1}$. The new cellular automaton $\varPhi^{*}:
(\mathbb F _{q}^{d+r+1})^{\mathbb Z}\rightarrow (\mathbb
F_{q}^{d+r+1})^{\mathbb Z}$, without memory, has the same left and
right radius as the old, and has a local rule which takes the
``central'' cell, discards the top entry of that $(d+r+1)$-tuple,
shifts the most recent $d+r$ entries up, and inserts the output of the
old local rule at the bottom. This construction means that there is
some sequence $(v_{n})_{n\geq0}\in(\mathbb F_{q}^{d+r+1})^{\mathbb N}$
which occurs as a column of some spacetime diagram for $\varPhi^{*}$,
and such that if we project each $v_{n}$ to its first entry, we obtain
$(u_{n})_{n\geq0}$. It is straightforward that $\varPhi^*$ is linear.
\end{proof}

\begin{corollary}\label{without_projection}
If $(u_n)_{n \geq0}$ is a $p$-automatic sequence, then $(u_n)_{n\geq
0}$ occurs as a column of a cellular automaton (without memory) whose
initial condition is eventually periodic in both directions.
\end{corollary}

\begin{proof}
\reftext{Corollary~\ref{without_memory}} provides a cellular automaton
on the alphabet $\mathbb F_q^{d+r+1}$ such that the sequence in
\xch{column}{Column}~$-2$, when projected onto first entries, is
$(u_n)_{n\geq0}$. We modify the cellular automaton to implement this
projection and produce a column which is the sequence $(u_n)$. Dilate
the existing spacetime diagram spatially by adding a new column between
every pair of consecutive existing columns. Adjust the local rule
correspondingly so that the retained (now ``even-indexed'') columns
emulate the original spacetime diagram. In each new (``odd-indexed'')
column, let the state of each cell be the first entry of the cell to
its left on the previous step. These two cases combine to form a local
rule on the alphabet $\mathbb F _q^{d+r+1} \cup\mathbb F_q$, since if
the value of a cell is in $\mathbb F_q$ then the rules ``knows'' to
perform the coding and otherwise to perform a linear rule on tuples.
Finally, remove the top row and use the second row as the initial
condition, since $u_0$ is the projection of an entry on the top row and
therefore is an entry on the second row.
\end{proof}

Note that the cellular automaton constructed in
\reftext{Corollary~\ref{without_projection}} is not linear; in
particular, the alphabet on which it is defined is no longer a group.

If $\varPhi$ and $\varPsi$ are two cellular automata with memory $d$ such that
\begin{eqnarray*}
\varPsi\Left{bigl}(\varPhi(R_0, R_1, \dots,
R_{d-1}), R_{d-1}, \dots, R_1\Right{bigr}) =
R_0
\end{eqnarray*}
for all $R_0, \dots, R_{d-1} \in\mathcal{A}^\mathbb Z$, we say that
$\varPhi$ is \emph{invertible}. The spacetime diagram of an invertible
cellular automaton can be evolved backward in time as well as forward,
just as the spacetime diagram of a cellular automaton whose local rule
is a bijective function of the leftmost or rightmost dependent cell can
be continued up the page~\cite{h,Rowland}.
\reftext{Fig.~\ref{inverted Rudin--Shapiro}} shows the spacetime
diagram of such an automaton.

\begin{corollary}\label{invertible}
If $(u_n)_{n \geq0}$ is a $p$-automatic sequence, then for some $r
\geq0$ the sequence $(u_n)_{n \geq r}$ occurs as a column of an
invertible cellular automaton with memory.
\end{corollary}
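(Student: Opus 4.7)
The plan is to refine the construction in Theorem~\ref{cellular automaton construction} so that the leading $t$-coefficient $C_d(x) = [t^d] P(t,x)$ of the annihilating polynomial becomes a Laurent monomial in $x$, supplementing the already-monomial constant term $C_0(x) = A_0(0)\,x$. The forward local rule $R_n = -\sum_{i=1}^d (C_i/C_0) R_{n-i}$ will then be complemented by an equally local backward rule $R_{n-d} = -\sum_{i=0}^{d-1} (C_i/C_d) R_{n-i}$, because monomiality of $C_d(x)$ makes each quotient $C_i(x)/C_d(x)$ a Laurent polynomial. Checking the resulting rules $\Phi$ and $\Psi$ against the definition preceding the corollary will then show that $\Psi(\Phi(R_1,\dots,R_d),R_d,\dots,R_2) = R_1$, so $\Phi$ is invertible with memory $d$.

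The main task is to produce an annihilating polynomial $P(t,x)$ of linearized form (satisfying the conclusions of Proposition~\ref{technical_1}) whose leading $t$-coefficient is a monomial. The natural construction does not always achieve this: for the Thue--Morse sequence, direct computation shows that the shifts $r=1$ and $r=2$ both yield $C_d(x) = x^2 + 1$. However, applying Proposition~\ref{technical_1} with $r=3$ yields $P(t,x) = (1+t^2)\,x + (t^3+t^4+t^5+t^6)\,x^2 + (t+t^3+t^4)$, whose leading coefficient in $t$ is $C_6(x) = x^2$, a Laurent monomial, and the associated memory-$6$ linear rule is invertible. Since in general $C_d(x) = \sum_j [t^d]A_j(t) \cdot x^{p^j} + [t^d]B(t)$, monomiality of $C_d(x)$ amounts to exactly one of these summands being nonzero at the highest $t$-degree, and I would argue that this can always be arranged for a suitable choice of shift $r$.

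Once such an $r$ is in hand, the remainder of the argument follows the proof of Theorem~\ref{cellular automaton construction}: the rows $R_1, R_2, \ldots$ of the spacetime diagram (obtained from the expansion of $P_x/P$) satisfy $\sum_{i=0}^d C_i(x) R_{n-i}(x) = 0$ for $n \geq d+1$, with column~$-2$ reproducing $(u_n)_{n \geq r+1}$, and both the forward and backward rules are local linear rules, witnessing the desired invertibility. The hard part will be establishing in full generality that some shift $r$ always produces a linearized annihilator with monomial leading $t$-coefficient. This will likely require a combinatorial analysis of how the maximum $t$-degrees of $A_0, A_1, \ldots, A_m, B$ evolve with $r$ in the proof of Proposition~\ref{technical_1}, or the exploitation of additional flexibility (for instance, multiplying $P$ by a polynomial $H(t)$ with $H(0)\neq 0$, which preserves the Proposition~\ref{technical_1} conditions while allowing one to modify the leading structure).
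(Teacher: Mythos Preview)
Your overall strategy is exactly that of the paper: arrange that the top $t$-coefficient $C_d(x)$ of $P(t,x)$ is a nonzero monomial in $x$, so that both the forward recurrence (divide by $C_0$) and the backward recurrence (divide by $C_d$) are local linear rules, giving the required pair $\Phi,\Psi$. Your identification of the gap is also accurate: everything hinges on showing that some shift $r$ forces $C_d(x)$ to be a monomial, and you have not done this.

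The paper fills this gap with two concrete observations you are missing. First, track what the substitution $x\mapsto c+tx$ does to the linearized form $\sum_i A_i(t)x^{p^i}+B(t)$: the coefficient of $x^{p^i}$ picks up a factor of $t^{p^i}$, so after each shift the $t$-degree of $A_i$ grows by exactly $p^i$. Since $p^m>p^i$ for $i<m$, after enough shifts the maximal power of $t$ among the $A_i$'s is attained \emph{uniquely} by $A_m$. This disposes of the competition among the $x^{p^i}$ terms. Second, the constant term $B(t)$ can still tie with $A_m$: the substitution contributes $A_m(t)\,c^{p^m}$ (and lower-degree pieces) to $B$, and if the final shift uses $c=u_r\neq 0$ then the top $t$-power lands in both $A_m x^{p^m}$ and $B$. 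The paper handles this by choosing $r$ with $u_r=0$; to guarantee such $r$ exist one first permutes the alphabet $\F_q$ so that $(u_n)$ has infinitely many zeros, builds the invertible automaton for the permuted sequence, and then applies the inverse permutation cellwise (which preserves invertibility, though not linearity over $\F_q$).

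Your alternative suggestion of multiplying $P$ by some $H(t)$ with $H(0)\neq 0$ cannot work: the leading $t$-coefficient of $H(t)P(t,x)$ is just the leading coefficient of $H$ times $C_d(x)$, so its shape in $x$ is unchanged. The Thue--Morse computation you did with $r=3$ is in fact an instance of the paper's mechanism (note $u_3=0$), but you need the growth-rate argument and the $u_r=0$ trick to turn the example into a proof.
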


\begin{proof}
It suffices to arrange that $C_d(x)$ is a nonzero monomial, since
solving\break $\sum_{i=0}^d C_i(x) R_{n-i}(x) = 0$ for $R_{n-d}(x)$ then
gives a linear local rule for each entry on row $n-d$ in terms of
entries on rows $n-d+1, \dots, n$.

We may assume that $(u_n)_{n \geq0}$ has infinitely many zero terms,
since if not then some permutation of $\mathbb F_q$ results in a sequence
$(v_n)_{n \geq0}$ with infinitely many zero terms, and after
constructing a spacetime diagram containing $(v_n)_{n \geq r}$ we can
apply the inverse permutation to obtain a spacetime diagram containing
$(u_n)_{n \geq r}$.

As in the proof of \reftext{Proposition~\ref{technical_1}}, start with
the polynomial
\begin{eqnarray*}
P^*(t,x) = A^*_{0}(t) x + A^*_{1}(t) x^{p} +
\cdots+ A^*_{m}(t) x^{p^m} + B^*(t)
\end{eqnarray*}
where $A^*_{0}(0) \neq0$, whose existence is guaranteed by
\reftext{Proposition~\ref{prop_1}}.
However, instead of letting $r = r^* + 1$ as in the proof of
\reftext{Proposition~\ref{technical_1}}, we determine $r$ as follows.

Observe that the polynomial $P^*(t, u_{r^*} + t x)$ has the same form
as $P^*(t,x)$ but has the property that, for each $i$ such that $1 \leq
i \leq m$, the coefficient of $x^{p^i}$ is more highly divisible by $t$
than $A^*_i(t)$ is. Similarly, $P^*(t, u_{r^*} + t (u_{r^* + 1} + t
x))$ has the same form again but with coefficients that are even more
highly divisible by~$t$. Under this iterative substitution $x \mapsto
u_n + t x$ for $n = r^*,r^* + 1, \dots{}$, the exponent of $t$ grows
fastest in the coefficient of $x^{p^m}$, so there exists $r^{**} \geq
r^*$ such that the highest power of $t$ in the polynomial
\begin{equation*}
P^*(t, u_{r^*} + u_{r^* + 1} t + \cdots+
u_{r^{**}} t^{r^{**} - r^*} + t^{r^{**} - r^* + 1} x)
\end{equation*}
appears only in the coefficient of $x^{p^m}$.
Let $r \geq r^{**} + 1$ such that $u_r = 0$, which exists since
$(u_n)_{n \geq0}$ has infinitely many zeros.

Now resume the proof of \reftext{Proposition~\ref{technical_1}}:
The series $x = G(t) :=\sum_{n \geq1} u_{n+r} t^n$ is a zero
of the polynomial
\begin{eqnarray*}
P^*(t, u_{r^*} + u_{r^* + 1} t + \cdots+
u_{r - 1} t^{r - 1 - r^*} + 0 t^{r - r^*} + t^{r - r^*} x),
\end{eqnarray*}
and moreover the highest power of $t$ in this polynomial appears only
in the coefficient of $x^{p^m}$ (and not also in the coefficient of
$x^0$, as it would have if $u_r \neq0$). Therefore $C_d(x)$ is a
monomial. By dividing by an appropriate power of~$t$, the conclusions
of \reftext{Proposition~\ref{technical_1}}, and hence
\reftext{Theorem~\ref{cellular_automaton_construction}}, are
preserved.
\end{proof}

If the polynomials $A_{m}(t)$ and $B(t)$ are monomials, then we can
also think of time as moving to the left and right; we use this idea in
Section~\ref{factors} to prove \reftext{Corollary~\ref{embedded}}.

We conclude this section with some open questions suggested by the
previous results.
\begin{itemize}
\item
\reftext{Corollary~\ref{without_projection}} provides an upper bound of
$q^{d+r+1} + |\mathcal A|$ for the number of states in a cellular
automaton spacetime diagram containing a given $p$-automatic sequence
as a column, where the alphabet of the sequence is $\mathcal A \subset
\mathbb F_q$. Can this bound be improved?

\item
Each column in the spacetime diagram constructed in
\reftext{Corollary~\ref{invertible}} is bi-infinite. Does every
letter-to-letter projection of a bi-infinite fixed point of a
$p$-uniform substitution (see Section~\ref{factors}) occur as a
column of a bi-infinite spacetime diagram?

\item
Does there exist a $3$-automatic sequence $(u_n)_{n \geq0}$ on a
binary alphabet such that $(u_n)$ is not eventually periodic and
$(u_n)$ occurs as a column of a (nonlinear) $2$-state spacetime diagram?
\reftext{Theorem~\ref{automatic_sequence_construction}} rules out the
possibility of the rule being linear over $\mathbb F_2$ since a sequence
which is both $2$-automatic and $3$-automatic is eventually periodic by
Cobham's theorem.

\item
Which $k$-automatic sequences (if $k$ is not a prime power) occur as
columns of cellular automaton spacetime diagrams?
\end{itemize}

\section{Examples}\label{examples}

Provided that we have an algorithm for generating the required
polynomial in \reftext{Proposition~\ref{technical_1}}, the proof of
\reftext{Theorem~\ref{cellular_automaton_construction}} shows us how to
build the required cellular automaton.

An inspection of the proof of \reftext{Theorem~\ref{Christol}} yields
an algorithm for producing an explicit polynomial equation satisfied by
$F(t) = \sum_{n \geq0} u_n t^n$. For, given the $p$-DFAO that defines
$(u_{n})_{n\geq0}$, we can build the $q$-DFAO $(\mathcal S,
\varSigma_{q},\delta, s_{0}, \mathbb F_{q}, \omega)$ that defines
$(u_{n})_{n\geq0}$. Next, the $q$-kernel of $(u_{n})_{n\geq0}$ consists
of the $q$-automatic sequences built with the DFAO $(\mathcal S,
\varSigma_{q},\delta, s, \mathbb F_{q}, \omega)$, where the initial
state $s$ varies over~$\mathcal{S}$. If the $q$-kernel of
$(u_{n})_{n\geq0}$ contains $d$ elements, let them be generated by
initial states $s_{1}, \ldots, s_{d}$: thus each $s_{i}$ determines a
sequence $(u_{n}^{(i)})_{n\geq0}$ in the $q$-kernel of $(u_{n})_{n\geq
0}$. We can then\vspace{1.3pt} write each generating function $F_{i}(t)$ of
$(u_{n}^{(i)})_{n\geq0}$ as an explicit linear combination of the
functions $F_{1}(t^{q}),\ldots, F_{d}(t^{q})$. We repeat this procedure
$d$ times to explicitly express each $F_{j}(t^{q^i})$, for $i$ and $j$
satisfying $1\leq j\leq d$ and $0\leq i\leq d$, as a linear
combination, over the field $\mathbb F_{q}(x)$, of the elements in
$\{F_{1}(t^{q^{d+1}}), \ldots, F_{d}(t^{q^{d+1}})\}$. Now we have a
linear relationship between $F_{s_0}(t), \ldots, F_{s_0}(t^{q^{d}})$,
and this yields a polynomial that is almost of the form required by
\reftext{Proposition~\ref{prop_1}} --- the polynomial $P^{*}(t,x) $ may
need to be modified so that $A_{0}^{*}(t)$ is not divisible by~$t$.
Inspection of the proof of \reftext{Proposition~\ref{prop_1}} in
\cite{Fur} shows that this modification can be done mechanically. We
refer the interested reader to the proof of Theorems~6.6.2 and~12.2.5
in~\cite{ash}.

As examples, next we compute the cellular automaton rules and initial
conditions that generate three well-known automatic sequences as columns.

\begin{example}\label{Thue--Morse_computation}
First let us compute the cellular automaton for the Thue--Morse
sequence shown in \reftext{Fig.~\ref{Thue--Morse}}. Christol's theorem
gives $t x + (1+t) x^2 + (1+t^4) x^4 = 0$ satisfied by $x = F(t) =
\sum_{n \geq0} u_n t^n$.

Apply the proof of \reftext{Proposition~\ref{prop_1}} to this
polynomial. The coefficient of $x^1$ is already nonzero. However, since
it is divisible by $t$ we find an appropriate $r$ such that
replacing\footnote{Formally we are making the substitution $x =
\sum_{n=0}^{r-2} u_n t^n + t^{r-1} y$ but as we will be making
additional substitutions we prefer to be slightly sloppy than overly
complicated.} $x$ with $\sum_{n=0}^{r-2} u_n t^n + t^{r-1} x$ and
dividing by common powers of $t$ leaves the coefficient of $x^1$ not
divisible by~$t$. In this case $r = 2$ suffices, so we replace $x
\mapsto0 + t x$ and divide by $t^2$. Then $x = G^*(t) :=\sum_{n \geq0}
u_{n+1} t^n$ satisfies
\begin{equation*}
x + (1+t) x^2 + \Left{bigl}(t^2+t^6 \Right{bigr}) x^4 = 0.
\end{equation*}

Now apply \reftext{Proposition~\ref{technical_1}}. Replace $x$ with
$u_{r-1} + u_r t + t x = 1 + 1 t + t x$ so that $x = G(t) :=\sum_{n
\geq1} u_{n+2} t^n$ satisfies $P(t, G(t)) = 0$, where
\begin{eqnarray*}
P(t, x) = \Left{bigl}(t^2+t^9\Right{bigr}) + x +
\Left{bigl}(t+t^2\Right{bigr}) x^2 + \Left{bigl}(t^5+t^9
\Right{bigr}) x^4.
\end{eqnarray*}

Note that $P^{(0,1)}(t,x) = 1$. By \reftext{Proposition~\ref{prop_2}},
$u_{n+2}$ is the coefficient of $x^{-2}$ in $R_n(x)$ for all $n \geq1$,
where $R_n(x)$ is the coefficient of $t^n$ in the series
\begin{eqnarray*}
\frac{P^{(0,1)}(t,x)}{P(t,x)}
 &=& \frac{1}{x} \sum_{n \geq0}
\Left{biggl}(\frac{x - P(t,x)}{t x} \Right{biggr})^n t^n =
\frac{1}{x} + t + \Left{biggl}(\frac{1}{x^2} + 1 + x \Right{biggr})
t^2 + \cdots \\
&=& \sum_{n \geq0} R_n(x) t^n.
\end{eqnarray*}
By collecting the terms of $P(t,x)$ by common powers of $t$, we see
that $R_n(x)$ satisfies the recurrence
\begin{eqnarray*}
R_n(x) = x R_{n-1}(x) + \Left{biggl}(\frac{1}{x} +
x \Right{biggr}) R_{n-2}(x) + x^3 R_{n-5}(x) +
\Left{biggl}(\frac{1}{x} + x^3 \Right{biggr}) R_{n-9}(x)
\end{eqnarray*}
for all $n \geq10$. This recurrence determines a linear cellular
automaton rule $\varPhi$ with memory~$9$. Extend the memory to $d + r +
1 = 12$ without introducing dependence on the earliest $r + 1 = 3$
rows. Let $R_{-2}(x) = u_0 x^{-2} = 0$, $R_{-1}(x) = u_1 x^{-2} =
x^{-2}$, and $R_0(x) = u_2 x^{-2} = x^{-2}$. Then the sequence
$(u_n)_{n \geq0}$ occurs in \xch{column}{Column}~$-2$ of the spacetime
diagram of $\varPhi$ begun from initial conditions $R_{-2}, \dots ,
R_9$. Columns $-7$ through $12$ of rows $R_{-2}, \dots, R_{13}$ appear
below, with \xch{column}{Column} $-2$ highlighted.
\begin{center}
	\scalebox{.8}{\includegraphics{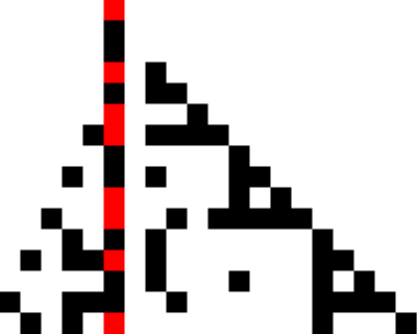}}
\end{center}

Note that the polynomial we computed from Christol's theorem is not the
minimal polynomial of $F(t)$, since $x = F(t)$ is also a zero of $t +
(1 + t^2) x + (1 + t + t^2 + t^3) x^2$. Using this polynomial instead
produces a different polynomial
\begin{eqnarray*}
P(t,x) = \Left{bigl}(t+t^3+t^4\Right{bigr}) +
\Left{bigl}(1+t^2\Right{bigr}) x + \Left{bigl}(t+t^2+t^3+t^4
\Right{bigr}) x^2
\end{eqnarray*}
and hence a different cellular automaton.
In fact $d = 4$ and $r = 1$, so the memory is lowered to $6$.
\end{example}

\begin{example}
In \reftext{Example~\ref{Thue--Morse_computation}} the coefficient of
$x^1$ in the polynomial obtained from Christol's theorem was nonzero,
and this saved some work. Therefore let us work out example where none
of the steps in the algorithm are trivial. Additionally, we go through
the construction of the polynomial from Christol's theorem. The
Rudin--Shapiro sequence is the $2$-automatic sequence $(u_n)_{n \geq0}
= 0, 0, 0, 1, 0, 0, 1, 0, \dots$ where $u_n = 0$ if the number of
(possibly overlapping) occurrences of $11$ in the binary representation
of $n$ is even and $u_n = 1$ otherwise.

First we apply \reftext{Theorem~\ref{Christol}} as follows to construct
a polynomial equation satisfied by $F(t) = \sum_{n \geq0} u_n t^n = t^3
+ t^6 + t^{11} + t^{12} + \cdots{}$. There are $|\mathcal S| = 4$
sequences in the $2$-kernel of $(u_n)$: $(u_n) = (u_{2n}) =
(u_{4n+1})$, $(u_{2n+1}) = (u_{8n+7})$, $(u_{4n+3}) = (u_{16n+11})$,
and $(u_{8n+3}) = (u_{16n+3})$. Each of the generating functions of
these four sequences can be broken up into its even- and odd-index
terms and rewritten in terms of the original four generating functions:
\begin{align*}
	F(t) = F_1(t) &\colonequal \sum_{n \geq 0} u_n t^n = F_1(t^2) + t \, F_2(t^2) \\
	F_2(t) &\colonequal \sum_{n \geq 0} u_{2n+1} t^n = F_1(t^2) + t \, F_3(t^2) \\
	F_3(t) &\colonequal \sum_{n \geq 0} u_{4n+3} t^n = F_4(t^2) + t \, F_2(t^2) \\
	F_4(t) &\colonequal \sum_{n \geq 0} u_{8n+3} t^n= F_4(t^2) + t \, F_3(t^2).
\end{align*}
Using these four equations iteratively, we may write each $F(t^{2^i})$
for $0 \leq i \leq|\mathcal S|$ as a linear combination of
$F_j(t^{2^{|\mathcal{S}|+1}})$.
The result of doing so is
\tiny
\[
	\begin{pmatrix}
		a_{11} & a_{12} & a_{13} & a_{14} \\
		1 + t^2 + t^4 + t^8 + t^{10} + t^{14} & t^{16} + t^{18} + t^{20} + t^{28} & t^{22} + t^{24} + t^{26} + t^{30} & t^6 + t^{12} \\
		1 + t^4 + t^8 & t^{16} + t^{20} + t^{28} & t^{24} & t^{12} \\
		1 + t^8 & t^{16} & t^{24} & 0 \\
		1 & t^{16} & 0 & 0
	\end{pmatrix}
	\begin{pmatrix}
		F_1(t^{32}) \\
		F_2(t^{32}) \\
		F_3(t^{32}) \\
		F_4(t^{32})
	\end{pmatrix}
	=
	\begin{pmatrix}
		F(t) \\
		F(t^2) \\
		F(t^4) \\
		F(t^8) \\
		F(t^{16})
	\end{pmatrix}
\]
\normalsize
where
\begin{eqnarray*}
a_{11} &=& 1 + t + t^2 + t^4 +
t^5 + t^7 + t^8 + t^9 +
t^{10} + t^{14}
\eqncr
\noalign{\vspace{-1pt}}
a_{12} &=& t^{16}
+ t^{17} + t^{18} + t^{20} + t^{21} +
t^{23} + t^{27} + t^{28} + t^{29} +
t^{31}
\eqncr
\noalign{\vspace{-1pt}}
a_{13} &=& t^{19} + t^{22}
+ t^{24} + t^{25} + t^{26} + t^{30}
\eqncr
\noalign{\vspace{-1pt}}
a_{14} &=& t^3 + t^6 + t^{11}
+ t^{12} + t^{13} + t^{15}.
\end{eqnarray*}\vspace{-1pt}%
\noindent
We have a system of equations in $9$ variables (the series
$F_j(t^{32})$ for $1 \leq j \leq4$ and $F(t^{2^i})$ for $0 \leq i \leq
4$), and performing Gaussian elimination on the corresponding $5 \times
9$ matrix gives, in the bottom row, the coefficients of a polynomial
equation satisfied by $x = F(t)$, namely\vspace{-1pt}
\begin{equation*}
t^6 x^2 + \Left{bigl}(1 + t^6\Right{bigr})
x^4 + \Left{bigl}(1 + t^4 + t^8 +
t^{12}\Right{bigr}) x^8 = 0.
\end{equation*}

Next we apply \reftext{Proposition~\ref{prop_1}}. Since $x^1$ does not
appear in the polynomial equation we have found, extract terms whose
power of $t$ is even (which in this case is all terms) and raise both
sides to the power~$1/2$. The resulting equation
\begin{eqnarray*}
t^3 x + \Left{bigl}(1 + t^3\Right{bigr})
x^2 + \Left{bigl}(1 + t^2 + t^4 +
t^6\Right{bigr}) x^4 = 0
\end{eqnarray*}
has a nonzero coefficient of $x^1$, as desired.
To obtain a coefficient of $x^1$ that is not divisible by~$t$, let $r =
4$, replace $x \mapsto0 + 0 t + 0 t^2 + t^3 x$, and divide by $t^6$.
Then $x = G^*(t) :=\sum_{n \geq0} u_{n+3} t^n$ satisfies
\begin{equation*}
x + \Left{bigl}(1 + t^3\Right{bigr}) x^2 +
\Left{bigl}(t^6 + t^8 + t^{10} +
t^{12}\Right{bigr}) x^4 = 0.
\end{equation*}

Finally, we apply \reftext{Proposition~\ref{technical_1}}.
Replace $x$ with $1 + 0 t + t x$ so that $x = G(t) :=\sum_{n
\geq1} u_{n+4} t^n$ satisfies $P(t, G(t)) = 0$, where
\begin{eqnarray*}
P(t, x) = \Left{bigl}(t^2 + t^5 + t^7 + t^9 + t^{11}\Right{bigr}) + x +
\Left{bigl}(t + t^4\Right{bigr}) x^2 + \Left{bigl}(t^9 + t^{11} +
t^{13} + t^{15}\Right{bigr}) x^4.
\end{eqnarray*}
By \reftext{Proposition~\ref{prop_2}}, $u_{n+4}$ is the coefficient of
$x^{-2}$ in $R_n(x)$ for all $n \geq1$, where
\begin{eqnarray*}
\sum_{n \geq0} R_n(x) t^n =
\frac{1}{x} + t + \Left{biggl}(\frac{1}{x^2} + x \Right{biggr})
t^2 + x^2 t^3 + \Left{biggl}(\frac{1}{x^3} + x^3 \Right{biggr}) t^4 +
x^4 t^5 + \cdots.
\end{eqnarray*}
Moreover, $R_n(x)$ satisfies the recurrence
\begin{eqnarray*}
R_n(x) &=& x R_{n-1}(x) + \frac{1}{x}
R_{n-2}(x) + x R_{n-4}(x) + \frac{1}{x}
R_{n-5}(x) + \frac{1}{x} R_{n-7}(x)
\\
&&{} + \Left{biggl}(\frac{1}{x} + x^3 \Right{biggr})
R_{n-9}(x) + \Left{biggl}(\frac{1}{x} + x^3
\Right{biggr}) R_{n-11}(x) + x^3 R_{n-13}(x) +
x^3 R_{n-15}(x)
\end{eqnarray*}
for all $n \geq16$.
Therefore the cellular automaton rule $\varPhi$ has memory $15$, which we
increase to $20$ to reinstate the initial rows.
The first $256$ rows of the resulting spacetime diagram appear in
\reftext{Fig.~\ref{Rudin--Shapiro}}.

\begin{figure}
	\begin{center}
		\scalebox{.8}{\includegraphics{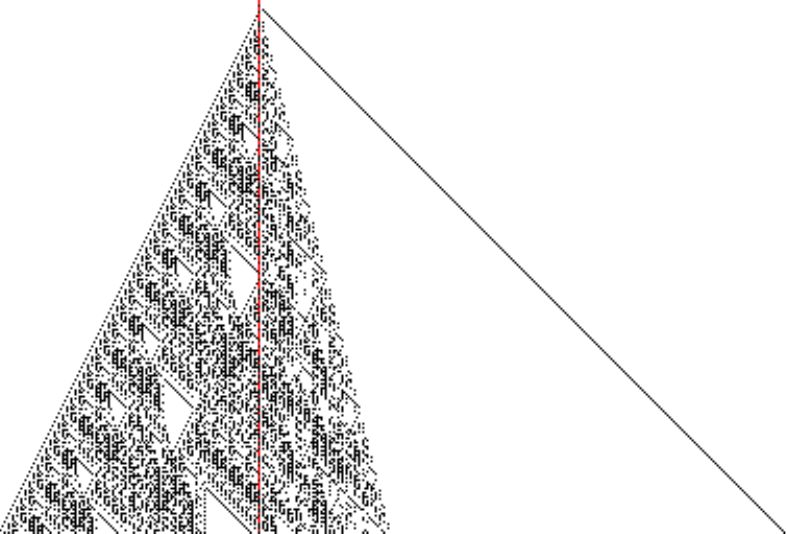}}
		\caption{Spacetime diagram of a cellular automaton with memory $20$ containing the Rudin--Shapiro sequence.}
		\label{Rudin--Shapiro}
	\end{center}
\end{figure}

\begin{figure}
	\begin{center}
		\scalebox{.8}{\includegraphics{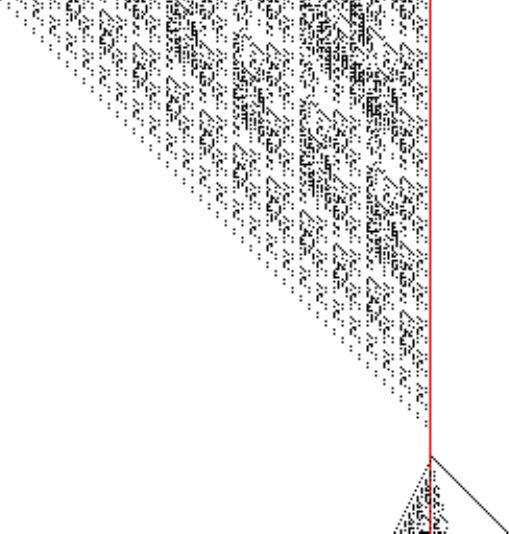}}
		\caption{Spacetime diagram showing the beginning of the infinite history of an invertible cellular automaton containing the Rudin--Shapiro sequence.}
		\label{inverted Rudin--Shapiro}
	\end{center}
\end{figure}

Note that since $u_r$ happened to be $0$, the highest power of $t$ in
$P(t,x)$ appears only in the coefficient of $x^4$, and therefore the
cellular automaton is invertible in accordance with
\reftext{Corollary~\ref{invertible}}.
\reftext{Fig.~\ref{inverted Rudin--Shapiro}} shows the spacetime
diagram for rows $R_{-215}$ through $R_{40}$.\looseness=1
\end{example}

\begin{example}
The Baum--Sweet sequence is the $2$-automatic sequence $(u_n)_{n \geq
0} = 1, 1, 0, 1, 1, 0, 0, 1, \dots$ where $u_n = 0$ if the binary
representation of $n$ contains a block of $0$s of odd length and $u_n =
1$ if not. (Note we consider the binary representation of $0$ to be the
empty word.) Christol's theorem gives $t^2 x + (1+t^3+t^4) x^2 + t^6
x^4 + (1+\nobreak t^4) x^8 = 0$ satisfied by $x = \sum_{n \geq0} u_n t^n$. The
output of \reftext{Proposition~\ref{technical_1}} is the
polynomial\vspace{-1pt}
\begin{eqnarray*}
P(t,x) &=& \Left{bigl}(t+t^3+t^4+t^7+t^{13}+t^{19}+t^{23}
\Right{bigr}) + x + \Left{bigl}(t+t^4+t^5\Right{bigr})
x^2 + t^{13} x^4 \\
&&{} + \Left{bigl}(t^{19}+t^{23}\Right{bigr}) x^8.
\end{eqnarray*}\vspace{-1pt}%
\noindent
Therefore we have a cellular automaton with memory $d + r + 1 = 23 + 3
+ 1 = 27$. The first $192$ rows appear in
\reftext{Fig.~\ref{Baum--Sweet}}.
\end{example}

\section{Substitution dynamical systems as factors of cellular
automata}\label{factors}\vspace{-1pt}

\begin{figure}
	\begin{center}
		\scalebox{.8}{\includegraphics{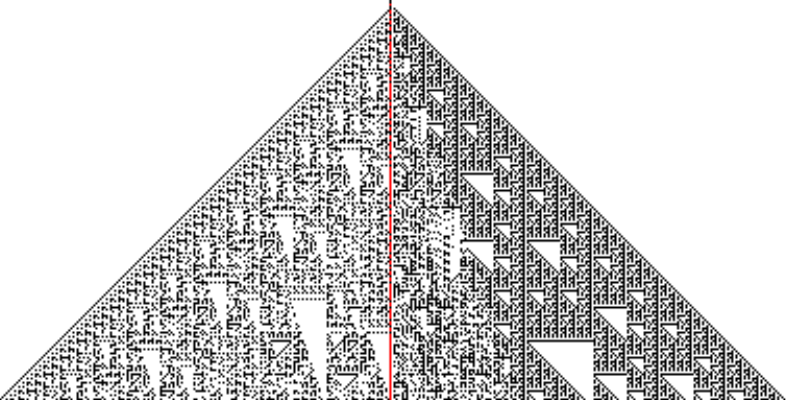}}
		\caption{Spacetime diagram of a cellular automaton with memory $27$ containing the Baum--Sweet sequence.}
		\label{Baum--Sweet}
	\end{center}
\end{figure}

In this section we apply \reftext{Theorem~\ref{characterization}} to conclude
that certain dynamical systems arise as factors of cellular automata.
First we define some terms.

\begin{definition}
Let $(X,S)$ and $(Y,T)$ be two dynamical systems.\vspace{-1pt}
\begin{enumerate}
\item If $X$ is a closed subset of $Y$ and $T(X)\subset X$, then
we say that $(X, T)$ is a \emph{subsystem of $(Y,T)$}.
\item If there exists a homeomorphism $\varPsi: Y\rightarrow X$ with
$\varPsi\circ T = S\circ\varPsi$, we say
the dynamical systems $(X, S)$ and $(Y,T)$ are \emph{topologically conjugate}.
\item If $(X,S)$ is conjugate to a subsystem of $(Y,T)$, then we
say that $(Y,T)$ \emph{embeds} $(X,S)$.
\item If there exists a continuous surjective mapping $\varPsi:Y
\rightarrow X$ such that $S\circ\varPhi= \varPhi\circ T$, we say $(X,S)$ is
a (topological) \emph{factor} of $(Y,T)$.\vspace{-1pt}
\end{enumerate}
\end{definition}

\begin{definition}
If $\mathbf{u}\in{\mathcal A}^{\mathbb N}$, define $X_{\mathbf{u}}
:= \overline{\{\sigma^{n}(\mathbf{u}): n\in\mathbb N\}}$.
The dynamical system $(X_{\mathbf{u}}, \sigma)$ is called the (one-sided)
subshift associated with $\mathbf{u}$.\vspace{-1pt}
\end{definition}

\begin{factortheorem*}
Let $\mathbf{u}$ be $p$-automatic. Then $(X_{\mathbf{u}},\sigma)$ is a
factor of a subsystem of some linear cellular automaton $((\mathbb
F_{q}^{d})^{\mathbb Z}, \varPhi)$.\vspace{-1pt}
\end{factortheorem*}

\begin{proof}
By \reftext{Corollary~\ref{without_memory}}, $\mathbf{u}$ is the image,
under a letter-to-letter projection, of a sequence $\mathbf{v}$ which
appears as a column in the \xch{spacetime}{space-time} diagram, with
initial condition $R_0$, of a linear cellular automaton $\varPhi$. We
shall show that $(X_{\mathbf {v}}, \sigma )$ is a factor of a subsystem
of $((\mathbb F_{q}^{d+r+1})^{\mathbb Z}, \varPhi )$; the fact that
$\mathbf{u}$ is a letter-to-letter projection of $\mathbf{v}$ implies
that $(X_{\mathbf{u}}, \sigma)$ is a factor of $(X_{\mathbf {v}},
\sigma)$.

Define the map $\varPsi:\{{\varPhi}^{n}({R_0})\}_{n\geq0}\rightarrow\{
\sigma ^{n}({\mathbf{v}})\}_{n\geq0}$ as $\varPsi({\varPhi
}^{n}({R_0})):=\sigma ^{n}({\mathbf{v}})$. Since $\mathbf{v}$ is a
column of the \xch{spacetime}{space-time} diagram of $\varPhi$ with
initial condition $R_0$, and $\varPhi$ is defined by a local rule, then
it is straightforward to see that $\varPsi$ is uniformly continuous. We
claim that the map $\varPsi:\{{\varPhi}^{n}({R_0})\}\rightarrow\{\sigma
^{n}({\mathbf{v}})\}$ extends to a continuous surjection
$\varPsi:\overline{\{{\varPhi}^{n}({R_0})\}}\rightarrow X_{\mathbf{v}}$
satisfying $\varPsi\circ\varPhi= \sigma\circ\varPsi$. The proof is
standard but we include it. Fix
$R\in\overline{\{{\varPhi}^{n}({R_0})\}}$ and suppose that
$\varPhi^{n_k}(R_0)\rightarrow R$. We will show that the set
$\{\sigma^{n_k}(\mathbf{v})\}$ has a unique limit point~$\mathbf{y}$,
which is independent of the sequence $(n_k)$. Given~$\epsilon$, the
uniform continuity of $\varPsi$ on the $\varPhi$-orbit of $R_0$ implies
that we can find a $\delta$ such that
$d(\sigma^{n}(\mathbf{v}),\sigma^{m}(\mathbf{v}))<\epsilon/3$ whenever
$d(\varPhi^{n}(R_0), \varPhi^{m}(R_0))<\delta$ (where $d$ is the metric
generated by the topology on the relevant Cantor space). Thus if
$\sigma^{n_{k_l}}(\mathbf{v})\rightarrow\mathbf{y}$ and $\sigma
^{n_{k'_l}}(\mathbf{v} )\rightarrow\mathbf{y}'$, then there is an $L$
such that if $l \geq L$ then $d(\sigma ^{n_{k_l}}(\mathbf{v}),
\sigma^{n_{k'_l}}(\mathbf{v}))<\epsilon/3$. If $L $ is also large
enough so that $\sigma^{n_{k_l}}(\mathbf{v})$, $\sigma
^{n_{k'_l}}(\mathbf{v})$ are $\epsilon/3$-close to $\mathbf{y}$,
$\mathbf{y}'$ respectively, then $d(\mathbf{y},\mathbf{y} ')<\epsilon$.
Hence $\mathbf{y}=\mathbf{y}'$.

Now suppose that $\varPhi^{m_k}(R_0)\rightarrow R$. The proximity of
$\varPhi^{n_k}(R_0)$ and $\varPhi^{m_k}(R_0)$, for large~$k$, implies
the proximity of $\sigma^{n_k}(\mathbf{v})$ and
$\sigma^{m_k}(\mathbf{v})$ for large~$k$, which implies that the limit
point of each of the sets $\{\sigma^{n_k}(\mathbf{v})\}$ and
$\{\sigma^{m_k}(\mathbf{v})\}$ is the same; let this limit point be
$\mathbf{y}$. We can now define $\varPsi(R)=\mathbf{y}$. To see that
$\varPsi$ is continuous, note that if $R$ and $R'$ are close, and
$\varPhi^{n_k}(R_0)\rightarrow R$, $\varPhi^{n'_{k}}(R_0)\rightarrow
R'$, then for large $k$ $\varPhi^{n_k}(R_0)$ and $\varPhi^{n_k'}(R_0)$
are close, which implies that $\mathbf{y}$ and $\mathbf{y}'$ are close.
To see that $\varPsi$ is surjective: if
$\sigma^{n_k}(\mathbf{v})\rightarrow \mathbf{y}$, then let $R$ be a
limit point of $\varPhi^{n_k}(R_0)$: then $\varPsi(R)=\mathbf{y}$.
Finally if $\varPhi^{n_k}(R_0)\rightarrow R$ and
$\sigma^{n_k}(\mathbf{u}) \rightarrow\mathbf{y}=\varPsi(R)$ then\vspace{-1pt}
\begin{eqnarray*}
\varPsi\circ\varPhi(R) = \varPsi\circ\varPhi\Left{bigl}(\lim
\varPhi^{n_k}(R_0)\Right{bigr})= \lim_{k}
\sigma^{n_{k}+1}(\mathbf{u}) = \sigma\lim_{k}
\sigma^{n_k}(\mathbf {u}) = \sigma\Left{bigl}(\varPsi (R)\Right{bigr}).
\qedhere
\end{eqnarray*}
\xmlstring{\qed}\goodbreak
\end{proof}

We now define a class of well-studied subshifts, and state Cobham's
theorem, which tells us that these subshifts arise from $p$-automatic sequences.

\begin{definition}
Let $\mathcal S$ be a finite alphabet. A \emph{substitution} (or
\emph{morphism}) is a map $\tau: {\mathcal S} \rightarrow{\mathcal
S}^{+}$. The map $\tau$ extends to a map $\tau: {\mathcal
S}^{+}\cup\mathcal S^{\mathbb N}\rightarrow{\mathcal S}^{+}\cup
\mathcal S^{\mathbb N}$ by concatenation: if ${\mathbf{a}}= a_{1}\cdots
a_{k}\cdots$, then $\tau({\mathbf{a}}) :=\tau(a_{1})
\cdots\tau(a_{k})\cdots{}$.
\end{definition}

\begin{definition}
Let $\tau$ be a substitution on $\mathcal S$. 
If $|\tau(a)|=k$ for each $a \in \mathcal S$, we say that 
$\tau$ is a \emph{length-$k$} substitution (or a \emph{$k$-uniform morphism}).
\end{definition}

\begin{definition}
A \emph{fixed point} of $\tau$ is a sequence $\mathbf{v}= (v_n)_{n
\geq0} \in{\mathcal S}^{\mathbb N}$ such that $\tau({\mathbf{v}}) =
{\mathbf{v}}$.
\end{definition}

Cobham's theorem \cite{C} gives us the relationship between
$k$-automatic sequences and fixed points of length-$k$ substitutions:

\begin{theorem}\label{Cobham}
A sequence is $k$-automatic if and only if it is the image, under a
letter-to-letter projection, of a fixed point of a length-$k$
substitution.
\end{theorem}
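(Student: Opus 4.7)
The plan is to prove both directions of the equivalence by explicit translations between DFAOs and length-$k$ substitutions, using the same underlying set for states and for the substitution alphabet so that the DFAO output function $\omega$ serves as the required letter-to-letter projection.

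For the implication from substitutions to automata, suppose $\bv = (v_n)_{n \geq 0}$ is a fixed point of a length-$k$ substitution $\tau \colon \mathcal{S} \to \mathcal{S}^k$ and $\pi \colon \mathcal{S} \to \mathcal{A}$ is a coding satisfying $u_n = \pi(v_n)$. I would build the DFAO with state set $\mathcal{S}$, initial state $v_0$, output function $\pi$, and transition $\delta(s, a)$ equal to the $a$-th letter of $\tau(s)$ (indexed $0, 1, \dots, k-1$). A short induction on the length of the base-$k$ representation shows that feeding the digits of $n$ into this DFAO leads from $v_0$ to the letter $v_n$, using that $\tau^j(v_0)$ is the prefix of $\bv$ of length $k^j$ and that the position of $v_n$ inside this prefix is encoded digit-by-digit by $n$.

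For the converse, suppose $(u_n)_{n \geq 0}$ is $k$-automatic via $(\mathcal{S}, \Sigma_k, \delta, s_0, \mathcal{A}, \omega)$. I would first replace the DFAO by an equivalent one for which $\delta(s_0, 0) = s_0$; this can be arranged by adjoining a new initial state that loops on the digit $0$ and otherwise matches $s_0$, and it is harmless because, in the natural most-significant-first reading convention, prepending leading zeros does not change the value of $n$. Then define $\tau \colon \mathcal{S} \to \mathcal{S}^k$ by $\tau(s) \colonequal \delta(s, 0)\,\delta(s, 1) \cdots \delta(s, k-1)$. The loop condition guarantees that $\tau(s_0)$ begins with $s_0$, so the iterates $\tau^n(s_0)$ form a nested sequence of prefixes of a unique fixed point $\bv \in \mathcal{S}^{\N}$, and the dual induction yields $v_n = \delta(s_0, (n)_k)$, so that $\omega$ projects $\bv$ letter-by-letter onto $(u_n)_{n \geq 0}$.

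The main obstacle I anticipate is the bookkeeping around reading conventions: the definition of $k$-automatic in the paper feeds base-$k$ digits least-significant-first, while the substitution--automaton correspondence above is more transparent in the most-significant-first convention. I would absorb this by invoking the standard equivalence between the two conventions, which is itself proved by reversing the transition function (possibly at the cost of enlarging the state set). Once this is in place, both inductions reduce to tracking the position of $v_n$ inside $\tau^j(s_0)$, which is a routine digit-by-digit computation.
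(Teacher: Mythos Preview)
Your proof proposal is the standard proof of Cobham's theorem and is correct; the obstacle you flag about the least-significant-first versus most-significant-first conventions is real and is handled exactly as you describe, via the classical reversal construction on DFAOs. However, there is nothing to compare against here: the paper does not prove this theorem at all. It is stated as a known result with a citation to Cobham~\cite{C}, and the paper merely invokes it to connect $p$-automatic sequences to length-$p$ substitution subshifts for the purposes of Theorem~\ref{factor}. So your write-up is not a reconstruction of a proof the authors gave, but rather a self-contained proof of a result they chose to quote.
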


Dynamicists have extensively studied substitution subshifts ---
references detailing some of this work include \cite{pf} and \cite{qu}.
Combining Cobham's theorem with \reftext{Theorem~\ref{factor}}, we
obtain the following.

\begin{corollary}
Let $\mathbf{v}$ be a fixed point of a length-$p$
substitution. Then $(X_{\mathbf{v}},\sigma)$ is a factor of a subsystem
of some linear cellular automaton $((\mathbb F_{q}^{d})^{\mathbb Z},
\varPhi)$.
\end{corollary}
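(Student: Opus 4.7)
The corollary is essentially a direct combination of Cobham's theorem with Theorem~\ref{factor}, so the plan is very short. First I would interpret ``substitution sequence generated by a length-$p$ substitution'' to mean a fixed point $\bv$ of some length-$p$ substitution $\tau : \mathcal S \to \mathcal S^+$. By Theorem~\ref{Cobham}, any such fixed point is $p$-automatic: indeed, $\bv$ is trivially the image of itself under the identity letter-to-letter projection $\mathcal S \to \mathcal S$, so the ``only if'' direction of Cobham's theorem applies.

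Once $\bv$ has been identified as a $p$-automatic sequence, I would apply Theorem~\ref{factor} verbatim: it produces $q$ and $d \geq 1$, a linear cellular automaton $\Phi : (\F_q^{\,d})^\Z \to (\F_q^{\,d})^\Z$, and a subsystem $(Y,\Phi)$ of $((\F_q^{\,d})^\Z,\Phi)$ such that $(X_\bv,\sigma)$ is a topological factor of $(Y,\Phi)$. This is precisely the conclusion sought, so the proof ends there.

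The only thing to be mildly careful about is that the alphabet $\mathcal S$ of $\tau$ need not be a subfield of $\F_q$; however, this is already handled in the discussion preceding Theorem~\ref{Christol}, where one injects $\mathcal S$ into a sufficiently large $\F_q$ before invoking the machinery of $p$-automatic sequences. The $q$ produced by Theorem~\ref{factor} may depend on this injection, but nothing in the statement of the corollary constrains $q$. There is no real obstacle: the work was done in proving Theorem~\ref{factor}, and the corollary simply names a well-studied class of examples to which that theorem applies.
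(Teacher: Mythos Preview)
Your proposal is correct and follows exactly the same approach as the paper, which simply remarks that the corollary is obtained by combining Cobham's theorem with Theorem~\ref{factor}. The additional care you take regarding the injection of $\mathcal S$ into some $\F_q$ is appropriate and consistent with the paper's conventions.
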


It would be interesting to know whether the factor mapping in
\reftext{Theorem~\ref{factor}} is, in some or all cases, an embedding.
This is in principle possible: in \cite{py6}, substitution systems are
embedded in subsystems of cellular automata; however the cellular
automata are tailored for the specific substitution and have no nice
algebraic or combinatorial structure. We end with an extra condition on
the polynomial given by \reftext{Proposition~\ref{technical_1}} which
would give an embedding of the substitution subshift into a cellular
automaton, and leave as an open question whether such a polynomial
always exists.

\begin{corollary}\label{embedded}
Suppose, using the notation of \reftext{Proposition~\ref{technical_1}}
and \reftext{Theorem~\ref{cellular_automaton_construction}}, that both
$A_{m}(t) = \alpha t^{d}$ and $B(t) = \beta t^{d}$ are monomials of
degree~$d$. Then $(X_{\mathbf{u}},\sigma)$ is the letter-to-letter
projection of a subshift that can be embedded in a linear cellular
automaton.
\end{corollary}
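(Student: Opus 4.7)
The plan is to refine the factor map of Theorem~\ref{factor} into a topological conjugacy --- and hence, by inversion, an embedding --- by enlarging each symbol of the image from a single column to a window of $p^{m}$ consecutive columns of the spacetime diagram. The monomial hypothesis on $A_{m}(t)$ and $B(t)$ is exactly what makes the diagram invertible in the column direction, and this is what promotes the factor map of Theorem~\ref{factor} to an injection.

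Write $p_{i,j}$ for the coefficient of $t^{i}x^{j}$ in $P(t,x)$. The identity $P(t,x)E(t,x)=P_{x}(t,x)$ that drives the proof of Theorem~\ref{cellular automaton construction} translates, on coefficients, into
\[
   \sum_{i,j}p_{i,j}\,a_{N-i,M-j}\;=\;[t^{N}x^{M}]\,P_{x}(t,x),
\]
where $a_{n,m}$ is the spacetime-diagram entry in row $n$, column $m$. Under the hypothesis, the only nonzero $p_{i,j}$ with $j=0$ is $p_{d,0}=\beta$, and the only one with $j=p^{m}$ is $p_{d,p^{m}}=\alpha$. So whenever the right-hand side vanishes the recurrence can be solved either for $a_{N-d,M}$ (a ``forward horizontal rule'', using entries with $1\le j\le p^{m}$) or for $a_{N-d,M-p^{m}}$ (a backward horizontal rule, using entries with $0\le j\le p^{m}-1$). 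Both rules involve only rows $N-d,\ldots,N$ and $p^{m}$ consecutive columns on one side of the target, so any $p^{m}$ consecutive columns of the diagram determine every other column.

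Let $\Phi$ be the memoryless linear cellular automaton on $(\F_{q}^{d+r+1})^{\mathbb Z}$ supplied by Corollary~\ref{without memory}, fix $n^{\star}>\deg_{t}P_{x}$, and define
\[
   \Psi'\colon \overline{\{\Phi^{n}(R_{0}):n\ge n^{\star}\}}\longrightarrow\bigl((\F_{q}^{d+r+1})^{p^{m}}\bigr)^{\mathbb N},\quad
   \Psi'(R)_{n}=\bigl(\Phi^{n}(R)(-p^{m}-1),\ldots,\Phi^{n}(R)(-2)\bigr).
\]
Let $Y$ be the closure of its image. Then $Y$ is a $\sigma$-subshift, $\Psi'\circ\Phi=\sigma\circ\Psi'$, and the letter-to-letter map picking the rightmost entry of a window and then its first coordinate projects $(Y,\sigma)$ onto the tail-shift of $(u_{n})_{n\ge n^{\star}}$; adjoining the orbits of the finitely many iterates $\Phi^{n}(R_{0})$ with $n<n^{\star}$ then recovers all of $X_{\bu}$. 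The heart of the proof is injectivity of $\Psi'$, which via $\Psi'^{-1}$ embeds $(Y,\sigma)$ into $((\F_{q}^{d+r+1})^{\mathbb Z},\Phi)$. By the continuity argument at the end of the proof of Theorem~\ref{factor}, the relation $\sum p_{i,j}a_{N-i,M-j}=0$ holds at every $(N,M)$ for every $R$ in this orbit closure, because a shift by $n_{k}\ge n^{\star}$ pushes $(n_{k}+N,M)$ beyond the finite support of $P_{x}$. So if $\Psi'(R)=\Psi'(R')$, the two spacetime diagrams agree on the window $\{-p^{m}-1,\ldots,-2\}$ at every row, and applying the forward (resp.\ backward) horizontal rule inductively in $M$ extends agreement to every column $M\ge-2$ (resp.\ $M\le-p^{m}-1$), forcing $R=R'$.

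The main obstacle is the bookkeeping for the two-sided induction: at each application of a horizontal rule one must check that the source cells $a_{N-i,M-j}$, at offsets $j\in\{1,p,\ldots,p^{m}\}$ and $i\in\{0,\ldots,d\}$, either lie in the sliding window already shown to agree or in the initial window. Since every such $j$ is at most $p^{m}$, a sliding window of width $p^{m}$ always contains them and the induction closes; the only other delicate choice is the threshold $n^{\star}$, which must be large enough that the right-hand side of the master recurrence genuinely vanishes on the restricted orbit closure, yet handled carefully so that the letter-to-letter projection still recovers all of $X_{\bu}$.
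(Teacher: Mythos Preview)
Your approach is essentially the same as the paper's. Both arguments rest on the observation that when $A_m(t)$ and $B(t)$ are monomials of degree $d$, the coefficient identity coming from $P(t,x)E(t,x)=P_x(t,x)$ can be solved uniquely for the entry at either horizontal extreme (the $x^{p^m}$ term via $\alpha$, the $x^0$ term via $\beta$); hence a window of consecutive columns determines the whole spacetime diagram, and the factor map of Theorem~\ref{factor} becomes injective. The paper phrases this geometrically as ``rotating the spacetime diagram by $90^\circ$'' to obtain a cellular automaton in the column direction with memory $l+r+1=p^m+1$; you phrase it algebraically as forward and backward ``horizontal rules'' and take a window of width $p^m$, which is indeed sufficient since the nonzero $j$-offsets lie in $\{0,1,p,\ldots,p^m\}$.

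One small point: your treatment of the threshold $n^\star$ and the final ``adjoining the orbits'' step is loose --- you have not argued that $\Psi'$ stays injective once the early iterates $\Phi^n(R_0)$ with $n<n^\star$ are included, nor that the letter-to-letter image of the enlarged $Y$ is exactly $X_{\bu}$ rather than a proper tail. That said, the paper's own proof is equally brief on these boundary matters, simply asserting that the map $\Psi$ of Theorem~\ref{factor} is a conjugacy on the full orbit closure once one passes to the multi-column word ${\bf V}$.
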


\begin{proof}
Recall that $R_{n}(x)= -\sum_{i=1}^{d}\frac{C_{i}(x)}{C_{0}(x)}
R_{n-i}(x)$. If $A_{m}(t)$ is a monomial, then only one of the
polynomials $C_{i}(x)$, say $i_{R}$, has the $x^{p^m}$ term. This means
that if the left radius of $\varPhi$ is $p^{m}-1$, and $\varPhi=
\sum_{i=1}^{d}\varPhi_{i}$ where $\varPhi_{i}$ is the cellular
automaton defined by $\frac{C_{i}(x)}{C_{0}(x)}$, then other than
$\varPhi_{d}$, all $\varPhi_{i}$'s have radius strictly less than
$p^{m}-1$. Rotating our original spacetime diagram $\mathcal
S_{\varPhi}$ counter-clockwise by 90 degrees, we see a new spacetime
diagram for another cellular automaton with memory $l+r+1$. Similarly,
only one of the cellular automata $\varPhi_{d}$ will have right radius
$l=1$, so that rotating $\mathcal S_{\varPhi}$ by 90 degrees clockwise,
we see another spacetime diagram for another cellular automaton with
memory $l+r+1$. This means that if in $\mathcal S_{\varPhi}$, the
central $l+r+1$ columns $C_{-l},\ldots, C_{r}$ have the same entries in
a large enough block of length $L$ starting at locations $m_{1}, m_{2}$
respectively, then the entries in two rows $R_{m_{1}}, R_{m_2}$ will
agree in a large central block. Thus if in the proof of
\reftext{Theorem~\ref{factor}}, we consider $X_{\mathbf{w}}$ where
$\mathbf{w}\in\mathcal S^{l+r+1}$ is the infinite word defined by the
columns $C_{-l},\ldots, C_{r}$, then the map $\varPsi$ defined in
\reftext{Theorem~\ref{factor}} is a topological conjugacy between
$(\overline{\{{\varPhi}^{n}({R_0})\}}, \varPhi)$ and $(X_{\mathbf {w}},
\sigma)$. Projecting $X_{\mathbf{w}}$ to the appropriate column
containing~$\mathbf{u}$, the result follows.
\end{proof}

\section*{Acknowledgments}

Most of this work was done during the second author's visit to McGill
University; we are thankful to this institution for its hospitality
and support. We would like also to thank M.~Pivato for useful
discussions, and J.-P.~Allouche for pointing out to us the results of
\cite{ahpps}.

\end{document}